\theoremstyle{plain}
\newtheorem{lem}{Lemma}[section]
\newtheorem{thm}[lem]{Theorem}
\newtheorem{cor}[lem]{Corollary}
\newtheorem{prop}[lem]{Proposition}
\theoremstyle{definition}
\newtheorem{defn}[lem]{Definition}
\newtheorem{example}[lem]{Example}
\theoremstyle{remark}
\newtheorem{question}[lem]{Question}
\newtheorem{problem}[lem]{Problem}
\newtheorem{rmrk}[lem]{Remark}
\newtheorem*{claim}{Claim}
\numberwithin{equation}{section}
\newcommand{\mc}[1]{\mathcal{#1}}
\newcommand{\al}{\aleph}
\newcommand{\af}{\alpha}
\newcommand{\bt}{\beta}
\newcommand{\om}{\omega}
\newcommand{\ka}{\kappa}
\newcommand{\lm}{\lambda}
\newcommand{\oml}{\om_1}
\newcommand{\alo}{\al_0}
\newcommand{\inv}[1]{#1^{-1}}
\newcommand{\vn}{\varnothing}
\newcommand{\istst}{it suffices to show that}
\newcommand{\istf}{it suffices to find}
\newcommand{\Wma}{We may assume}
\newcommand{\suchthat}{\mathrel{}|\mathrel{}}
\DeclareMathOperator{\cf}{cf}
\DeclareMathOperator{\dom}{dom}
\DeclareMathOperator{\ran}{ran}
\DeclareMathOperator{\powset}{\mc{P}}
\newcommand{\gen}[2][]{\left\langle #2\right\rangle_{#1}}
\newcommand{\imp}{\Rightarrow}
\newcommand{\biimp}{\Leftrightarrow}
\DeclareFontFamily{U}{mathb}{\hyphenchar\font45}
\DeclareFontShape{U}{mathb}{m}{n}{
      <5> <6> <7> <8> <9> <10> gen * mathb
      <10.95> mathb10 <12> <14.4> <17.28> <20.74> <24.88> mathb12
      }{}
\DeclareSymbolFont{mathb}{U}{mathb}{m}{n}
\DeclareFontFamily{U}{mathx}{\hyphenchar\font45}
\DeclareFontShape{U}{mathx}{m}{n}{
      <5> <6> <7> <8> <9> <10>
      <10.95> <12> <14.4> <17.28> <20.74> <24.88>
      mathx10
      }{}
\DeclareSymbolFont{mathx}{U}{mathx}{m}{n}
\DeclareMathSymbol{\bigboxplus}{1}{mathx}{"D0}
\DeclareMathSymbol{\bigboxtimes}{1}{mathx}{"D2}
\DeclareMathAccent{\widecheck}{0}{mathx}{"71}
\newcommand{\bigcomm}{
  \mathop{
    \vcenter{
      \hbox{\oalign{\noalign{\kern-.3ex}\hfil$\vert$\hfil\cr
        \noalign{\kern-.7ex}
        $\smile$\cr\noalign{\kern-.3ex}}}
    }
  }  
}
\newcommand{\tlex}{{\mathrm{lex}}}
\newcommand{\closure}[1]{\overline{#1}}
\newcommand{\nbd}{\nobreakdash}
\newcommand{\cardd}{\mathfrak{d}}%dominating number
\newcommand{\cardc}{\mathfrak{c}}%continuum
\newcommand{\betaoo}{\beta\om\setminus\om}
\newcommand{\character}[1]{\chi(#1)}
\newcommand{\wolog}{without loss of generality}
\newcommand{\cardcp}{\cardc^+}
\newcommand{\arhan}{Arhan\-gel${}^\prime$ski\u\i}
\newcommand{\locsplit}[3][.]{\mathrm{split}_{#2}^{\ifx#1.{}\else{#1}\fi}(#3)}
\newcommand{\dfn}{\begin{defn}}
\newcommand{\nfd}{\end{defn}}
\newcommand{\Ists}{It suffices to show that}
\newcommand{\pin}{\equiv_{\text{p}}}
\newcommand{\eqtuk}{\equiv_{\mathrm{T}}}
\newcommand{\leqtuk}{\leq_{\mathrm{T}}}
\newcommand{\leqrk}{\leq_{\mathrm{RK}}}
\newcommand{\abs}[1]{\lvert #1\rvert}
\begin{document}

\title{Between homeomorphism type and Tukey type}
\author{David Milovich}
\email{david.milovich@welkinsciences.com}
\urladdr{http://dkmj.org}
\address{
  Welkin Sciences\\
  2 N Nevada Ave, Suite 1280\\
  Colorado Springs, CO 80903, USA
}
\date{\today}
\begin{abstract}%keep under 300 words
Call a compact space $X$ {\it pin homogeneous} if
every two points $a,b$ are {\it pin equivalent},
meaning that there exists
a compact space $Y$, a quotient map $f\colon Y\to X$,
and a homeomorphism $g\colon Y\to Y$
such that $g\inv{f}\{a\}=\inv{f}\{b\}$.
We will prove a representation theorem for pin
equivalence; transitivity of pin equivalence
will be a corollary.

Pin homogeneity is strictly weaker than homogeneity
and pin equivalence is strictly stronger than
Tukey equivalence.
Just as with topological homogeneity,
no infinite compact $F$-space is pin homogeneous.
On the other hand, $X\times 2^{\character{X}}$
is pin homogeneous for every compact $X$.
And there is a compact pin homogeneous space
with points of different $\pi$-character.
\end{abstract}

\subjclass[2010]{
  Primary: 54A25, 54G05;
  Secondary: 54D30, 06E05, 03E04.
  Keywords: pin equivalence, pin homogeneous,
  F-space, compact, homogeneous, Tukey equivalence.
}

\maketitle

\section{Introduction}
In this paper, all spaces are assumed to be Hausdorff.

Products, but apparently not much else,
preserve both compactness and homogeneity\footnote{
  A space $X$ is {\it homogeneous} if for each pair $(a,b)\in X^2$
  some autohomeomorphism $f\colon X\to X$ sends $a$ to $b$.}
of topological spaces.
Meanwhile, compact topological groups are ccc and \v Cech-Stone
remainders of infinite discrete spaces are not homogeneous.
For essentially these reasons,
``large'' homogeneous compact spaces are hard to come by.
Van Douwen's Problem, asked no later than 1980
and still open in all models of ZFC~\cite{vanmillsurvey},
asks whether there is a homogeneous compact space
with $\cardcp$-many disjoint open sets.
This a special case of a very natural question:

\begin{question}\label{bigquestion}
  Is every compact space $X$ a continuous image
  of some homogeneous compact space
  $Y$?~\cite{vanmillopit2}
\end{question}

Even for some important spaces $X$
without $\cardcp$-many disjoint open sets,
including $\oml+1$, $\beta\om$, and $\betaoo$,
the above question is open as far as I know.
Two significant partial results are:
\begin{enumerate}
\item (Motorov~\cite{motorov})
  If $X$ is first countable, compact,\footnote{
  Dow and Pearl~\cite{dowpearl} showed that
  compactness is not needed here.}
  and zero-dimensional, then $X^\omega$ is homogeneous.
\item (Kunen~\cite{kunenlhc}) No product of one or more
  infinite compact $F$-spaces and zero or more
  spaces with character less than $\cardc$ is
  homogeneous.
\end{enumerate}

Approaching Question~\ref{bigquestion} less directly,
we can consider weaker forms of homogeneity.
For example, say that a space is {\it Tukey homogeneous}
if every two points have Tukey equivalent neighborhood filters.
This is a much weaker than homogeneity.
For example, the compact space $2^\om\times 2^{\oml}_{\tlex}$
is Tukey homogeneous yet has points with different $\pi$-characters.
And, in striking contrast to Kunen's theorem,
$X\times 2^{\character{X}}$ is Tukey homogeneous
for every infinite space $X$.~\cite{mnth}
And though $\betaoo$ is Tukey inhomogeneous
under the assumption of $\cardd=\cardc$
(and more generally in any model of set theory
where $\betaoo$ has a P-point),
whether ZFC alone proves this inhomogeneity
is a significant open problem in its own right,
equivalent~\cite{mtuk} to Isbell's Problem:

\begin{question}
  Is it consistent with ZFC that $(\mc{U},\supset)$
  is Tukey equivalent to $([\cardc]^{<\alo},\subset)$
  for every free ultrafilter $\mc{U}$ on
  $\om$?~\cite{isbell,dobrinen}
\end{question}

This paper introduces pin equivalence, a strict strengthening
of Tukey equivalence of points in compact spaces.
Pin equivalence enjoys a representation theorem
in terms of closed binary relations
and, through Stone duality,
an appealing Boolean algebraic interpretation.
We will also show that, as in the case of topological homogeneity,
no infinite compact $F$-space is pin homogeneous.
On the other hand, as in the Tukey case,
$X\times 2^{\character{X}}$ is pin homogeneous for all compact $X$,
as is every first countable crowded compact $X$.
We will also show that $2^\om\times 2^{\oml}_{\tlex}$
is pin homogeneous despite having points with
different $\pi$-characters. Thus,
$2^\om\times 2^{\oml}_{\tlex}$ is a pin homogeneous
space with all points of Tukey type $\om\times\oml$,
which I count as a tiny bit of progress towards
answering a question I have asked before:

\begin{question}
  Is there a compact homogeneous space with
  points of Tukey type $\om\times\oml$?~\cite{mrect}
\end{question}

In every known example of a compact homogeneous space $X$,
the (neighborhood filters of) points are Tukey equivalent
to $([\character{X}]^{<\alo},\subset)$.~\cite{mnth}

Without further ado, pin equivalence defined:

\begin{defn}\label{pindef}\
  \begin{itemize}
  \item Call closed sets $A,B$ in a compact space $X$
    {\it pin equivalent} and write $A\pin B$
    if there exist a compact space $Y$,
    a continuous surjection $f\colon Y\to X$,\footnote{
    Between compact Hausdorff spaces,
    all continuous surjections are quotient maps.}
    and a homeomorphism $g\colon Y\to Y$
    such that $g\inv{f}A=\inv{f}B$.
  \item Call points $a,b$ in a compact space $X$
    pin equivalent and write $a\pin b$ if $\{a\}\pin\{b\}$.
  \item Call a compact space {\it pin homogeneous} if
    all pairs of points are pin equivalent.
  \end{itemize}  
\end{defn}
(Pin equivalence is transitive, but not obviously so.
Wait for the proof.)

Observe that, \wolog, $f$ may be assumed invertible
at $a$ and $b$ because we may replace $Y$ with
its quotient where $\inv{f}\{a\}$ and $\inv{f}\{b\}$
are collapsed to points. This leads to my motivation for ``pin.''
I visualize $X$ inflated to a continuous preimage $Y$,
but with $a$ and $b$ pinned down.

\begin{example}
  Closed intervals are pin homogeneous. To see why,
  let us show that $0\pin 2$ in $X=[-2,2]$.
  Let $A$ be the hollow diamond
  \[\{(x,y)\in[-2,2]^2\suchthat \abs{x}+\abs{y}=2\}.\]
  Truncate $A$ to $B=A\cap[-1,2]^2$ and then
  extend to $Y=B\cup[-2,-1]^2$. Then $f(x,y)=x$
  defines a continuous surjection from $Y$ to $X$ and
  $g(x,y)=(y,x)$ defines a continuous involution of $Y$
  such that \[g\inv{f}\{0\}=g\{(0,2)\}=\{(2,0)\}=\inv{f}\{2\}.\]
  We will show later that every instance of pin equivalence
  in an arbitrary compact $X$ is
  also witnessed by a symmetric subspace of $X^2$.
\end{example}

\begin{defn}
  A space is {\it Boolean} if is compact and
  has a base consisting of clopen sets.
\end{defn}

Restricting the definition of pin equivalence
to Boolean spaces and applying Stone duality,
we obtain and algebraic version of
pin equivalence that is very natural:
two filters are pin equivalent
if they generate isomorphic filters
in some larger Boolean algebra.
More precisely:

\begin{defn}\label{boolpindef}
  Given two filters $F,G$ of a Boolean algebra $A$,
  we say that $F$ and $G$ are {\it pin equivalent}
  in $A$ and write $F\pin G$ if there is
  a Boolean algebra $B$ extending $A$ and there is
  a (Boolean) automorphism $h$ of $B$ that
  sends the filter of $B$ generated by $F$
  to the filter of $B$ generated by $G$.
\end{defn}

Next observe that if $f$ in Definition~\ref{pindef}
is required to be a homeomorphism
instead of a mere continuous surjection, then
pin homogeneity becomes homogeneity.
This suggests a strategy for incremental progress
towards solving the open problem of
whether every compact space is a quotient
of a homogeneous compact space:
start with the positive solution to the
analogous problem for pin homogeneous compacts
and incrementally require more of $f$.

\begin{question}\label{increment}
  How much can we strengthen pin homogeneity
  before the analog of Question~\ref{bigquestion}
  for this intermediate homogeneity concept
  becomes as hard as Question~\ref{bigquestion} itself?
\end{question}

A natural strengthening of ``continuous surjection''
is ``open continuous surjection.''
So, let us define {\it open pin equivalence} and
{\it open pin homogeneity} by the requirement
that $f$ to also be an open map. 
Open pin homogeneous compact spaces appear
more difficult to obtain. 
In particular, open pin equivalence is easily seen
to preserve $\pi$-character.

\begin{question}
  Is every compact space a continuous image
  of an open pin homogeneous compact space?
\end{question}

\section{A representation theorem}
To my mind, the best evidence so far that pin equivalence
is worth studying is the following representation theorem.

\begin{thm}\label{represent}
  Points $a,b$ in compact space $X$ are pin equivalent
  iff there is a symmetric binary relation $R$ with
  domain $X$ such that $R$ is closed in $X^2$ and,
  for all $x\in X$, we have
  $aRx\biimp x=b$ and $bRx\biimp x=a$.
\end{thm}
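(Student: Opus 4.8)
The plan is to prove both implications, with the backward one (from $R$ to pin equivalence) routine and the forward one carrying all the weight. For the backward direction, suppose such an $R$ exists. I would take $Y=R$ as a subspace of $X^2$, compact since $R$ is closed; let $f\colon Y\to X$ be the first--coordinate projection, and let $g\colon Y\to Y$ be the coordinate swap $(x,x')\mapsto(x',x)$. Because $R$ is symmetric, $g$ maps $Y$ into $Y$ and is an involution, hence a homeomorphism; because $\dom R=X$, the map $f$ is surjective. Finally $\inv f\{a\}=\{(a,x'):aRx'\}=\{(a,b)\}$ and likewise $\inv f\{b\}=\{(b,a)\}$, so $g\inv f\{a\}=\{(b,a)\}=\inv f\{b\}$, witnessing $a\pin b$. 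Note that this $R$ is exactly the promised symmetric subspace of $X^2$, and that the $g$ produced here is an \emph{involution interchanging the two fibers}.

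For the forward direction, start with a witness $(Y,f,g)$ and, invoking the remark after Definition~\ref{pindef}, assume $f$ is invertible at $a$ and $b$, with $\inv f\{a\}=\{\tilde a\}$, $\inv f\{b\}=\{\tilde b\}$, and $g\tilde a=\tilde b$. The natural candidate is the pushforward $R_0=(f\times f)\{(y,gy):y\in Y\}$, a closed subset of $X^2$ whose row over $a$ is exactly $\{b\}$. The difficulty is that $R_0$ need not be symmetric, and passing to $R_0\cup\inv{R_0}$ adjoins to the fiber of $a$ the point $f(\inv g\tilde a)$, which need not equal $b$. Tracing this through shows that the theorem is equivalent to the assertion that $a\pin b$ is always witnessed by a triple in which $g$ \emph{interchanges} the two fibers (equivalently, by an involution $g$): given such a swap--witness, one has $\inv g\tilde a=\tilde b$, so $R=R_0\cup\inv{R_0}$ is closed, symmetric, total (as $f$ is onto), and has fibers $\{b\}$ and $\{a\}$ over $a$ and $b$; and the backward construction above produces precisely a swap--witness.

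To manufacture a swap--witness I would glue two copies of $Y$. The restriction $g\restrict\inv f\{a\}$ is a homeomorphism onto $\inv f\{b\}$, so it induces an involution $s$ of the closed set $C=\inv f\{a,b\}$ interchanging the two fibers. Form $Z=(Y\times\{0\})\sqcup(Y\times\{1\})$ with $(x,0)$ identified to $(s(x),1)$ for $x\in C$, let $\sigma$ be the copy--swap, and map $Z$ to $X$ by $f$ on the first copy and by $f\circ\tilde s$ on the second, where $\tilde s$ is a homeomorphism of $Y$ extending $s$. A short check shows $\sigma$ is a well--defined involution of the compact Hausdorff space $Z$, that the induced map $F\colon Z\to X$ is a continuous surjection, and that $\sigma$ carries $\inv F\{a\}$ onto $\inv F\{b\}$; thus $(Z,F,\sigma)$ is the desired swap--witness, and $R=(F\times F)\{(z,\sigma z):z\in Z\}\cup(\text{its transpose})$ is the sought relation.

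The main obstacle is the last ingredient: extending the fiber--interchanging $s$ to a self--homeomorphism $\tilde s$ of $Y$. Even with single fibers this is the demand for a homeomorphism of $Y$ swapping $\tilde a$ and $\tilde b$, which on a fixed $Y$ can fail (a homeomorphism sending $\tilde a$ to $\tilde b$ need not be reversible by one sending $\tilde b$ to $\tilde a$). The real work is therefore to first replace $(Y,f,g)$ by a sufficiently flexible witness on which the extension is available---for instance by absorbing a Cantor--cube factor, or by passing through Stone duality to the Boolean picture of Definition~\ref{boolpindef}, where the partial isomorphism extends to an automorphism of a larger Boolean algebra by amalgamation. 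I expect this extension step, rather than the gluing bookkeeping, to be where the difficulty concentrates, and I expect transitivity of $\pin$ to fall out afterward by composing the symmetric relations $R$ supplied by the representation.
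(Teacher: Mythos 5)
Your backward direction is correct and is exactly the paper's (it is the content of the Corollary following the theorem). But the forward direction has a genuine gap, and you have located it yourself: your whole plan reduces the theorem to producing a ``swap--witness,'' and that reduction in turn hinges on extending the fiber--interchanging partial map $s$ (or, in the singleton--fiber case, a homeomorphism swapping $\tilde a$ and $\tilde b$) to a self--homeomorphism of $Y$ or of some enlarged witness. You do not carry out that extension, and no such extension exists in general for a fixed $Y$; the suggested fixes (absorbing a Cantor cube, Boolean amalgamation) are left as hopes rather than arguments. As written, the proof is not complete.

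The missing idea is that no global symmetrization of $g$ is needed, because the required relation only has to be controlled \emph{near} $a$ and $b$; everywhere else it can be garbage. The paper takes your $R_0=(f\times f)\{(y,gy)\suchthat y\in Y\}$ but first shrinks it: using a tube lemma for fibers (Lemma~\ref{fibertube}) it produces disjoint closed neighborhoods $A_3$ of $a$ and $B_3=fg\inv{f}A_3$ of $b$ such that $S=R_0\cap(A_3\times B_3)$ still has domain $A_3$ and range $B_3$, and such that $aSq\biimp q=b$ and $pSb\biimp p=a$. Now the symmetrization $S\cup\inv{S}$ is harmless: $\inv{S}$ lives in $B_3\times A_3$ and $a\notin B_3$, so the spurious point $f(\inv{g}\tilde a)$ that worried you never enters the row of $a$. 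Setting $R=S\cup\inv{S}\cup D^2$ with $D=\closure{X\setminus(A_3\cup B_3)}$ restores full domain. This local restriction step is exactly what replaces your (unavailable) global extension of $s$; if you add it, the rest of your outline collapses to a short argument, and transitivity indeed follows afterward from the representation, as you anticipated.
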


\begin{cor}
  If $a,b$ are pin equivalent in a compact space $X$,
  then this is witnessed by $f,g,Y$ where
  $Y$ is a closed symmetric subset of $X^2$,
  $f\colon Y\to X$ is the first coordinate projection,
  and $g\colon Y\to Y$ is the continuous involution
  $(x,y)\mapsto(y,x)$.
\end{cor}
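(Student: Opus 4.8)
The plan is to read the witnessing data directly off the relation supplied by Theorem~\ref{represent}, so that the corollary becomes a matter of bookkeeping rather than new construction. Assuming $a,b$ are pin equivalent, I would first invoke that theorem to obtain a symmetric relation $R$ that is closed in $X^2$, has domain $X$, and satisfies $aRx\biimp x=b$ and $bRx\biimp x=a$ for all $x\in X$. I would then set $Y=R$, take $f\colon Y\to X$ to be the first-coordinate projection $(x,y)\mapsto x$, and take $g\colon Y\to Y$ to be the swap $(x,y)\mapsto(y,x)$, and check that this triple satisfies every clause of Definition~\ref{pindef}.

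Next I would verify the structural requirements. Since $X$ is compact, so is $X^2$, and as $Y=R$ is closed in $X^2$ it is itself a compact (Hausdorff) space. The map $f$ is continuous as a restriction of a coordinate projection, and it is surjective precisely because $R$ has domain $X$: for each $x\in X$ there is some $y$ with $(x,y)\in R=Y$ and $f(x,y)=x$. The swap $g$ sends $Y$ into $Y$ exactly because $R$ is symmetric, it is continuous as a restriction of the coordinate swap of $X^2$, and it is its own inverse, hence a homeomorphism of $Y$ onto itself.

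Finally I would check the pinning identity $g\inv{f}\{a\}=\inv{f}\{b\}$. Computing the fibre gives $\inv{f}\{a\}=\{(a,y)\suchthat (a,y)\in R\}$, and the equivalence $aRy\biimp y=b$ collapses this to the single point $(a,b)$, so $g\inv{f}\{a\}=\{(b,a)\}$. Symmetrically $\inv{f}\{b\}=\{(b,y)\suchthat (b,y)\in R\}$, and $bRy\biimp y=a$ collapses this to $\{(b,a)\}$. Hence $g\inv{f}\{a\}=\{(b,a)\}=\inv{f}\{b\}$, so $(Y,f,g)$ witnesses $a\pin b$ with exactly the promised shape.

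I do not expect a genuine obstacle here: all of the difficulty is already carried by Theorem~\ref{represent}, and the corollary merely repackages its conclusion. The only points worth flagging are the two \emph{side} hypotheses on $R$. The domain condition is what makes $f$ surjective, and symmetry is what makes $g$ a well-defined involution of $Y$; dropping either one would destroy the witnessing data. This is presumably the reason the theorem is stated with precisely those two conditions, and it is what lets the corollary extract the canonical witness $(Y,f,g)$ without any further work.
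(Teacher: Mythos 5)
Your proof is correct and is exactly the intended argument: the paper leaves the corollary without proof precisely because it follows by reading $Y=R$, the first-coordinate projection, and the swap off the relation from Theorem~\ref{represent}, just as you do (the same packaging appears explicitly at the end of the proof of Theorem~\ref{boolisopin}). All the verifications — surjectivity from $\dom(R)=X$, well-definedness of the involution from symmetry, and the fibre computation collapsing to $\{(a,b)\}$ and $\{(b,a)\}$ — are accurate.
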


Before proving Theorem~\ref{represent},
we establish a version of the tube lemma for fibers.

\begin{lem}\label{fibertube}
  Suppose $Y$ is compact, $f\colon Y\to X$ is continuous,
  $x\in X$, and $V$ is a neighborhood of $\inv{f}\{x\}$.
  Then is a neighborhood $U$ of $x$ such that
  $\inv{f}U\subset V$.
\end{lem}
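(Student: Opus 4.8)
The plan is to exploit compactness by pushing the complement of $V$ forward through $f$. First I would reduce to the case that $V$ is open: the hypothesis only requires $V$ to be a neighborhood of the fiber, so it contains an open set $V^\circ$ with $\inv{f}\{x\}\subseteq V^\circ\subseteq V$, and proving the conclusion for $V^\circ$ yields it a fortiori for $V$. So \wma $V$ is open.

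Next, set $K=Y\setminus V$. Since $V$ is open, $K$ is closed in the compact space $Y$, hence $K$ is itself compact. As $f$ is continuous, $f(K)$ is a compact subset of $X$, and because $X$ is Hausdorff (all spaces in this paper being Hausdorff), $f(K)$ is therefore closed in $X$. Moreover $x\notin f(K)$: the assumption $\inv{f}\{x\}\subseteq V$ says that no point of $Y$ mapping to $x$ lies in $K$, which is exactly the statement that $x$ is not in the image $f(K)$.

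I would then take $U=X\setminus f(K)$, which is open and contains $x$. It remains only to verify $\inv{f}U\subseteq V$. If $y\in\inv{f}U$, then $f(y)\notin f(K)$; since every point of $K$ has its image in $f(K)$, this forces $y\notin K$, i.e.\ $y\in V$. This gives $\inv{f}U\subseteq V$, completing the argument.

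I do not anticipate a real obstacle here, since this is a routine fiber-wise variant of the classical tube lemma. The only points that require any care are the reduction to $V$ open and the invocation of Hausdorffness of $X$, which is what makes the compact image $f(K)$ closed and hence $U$ open; notably, neither surjectivity nor openness of $f$ is needed.
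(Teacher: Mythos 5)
Your proposal is correct and is essentially the paper's own argument: the paper likewise takes the (compact) complement of the interior of $V$, pushes it forward to a closed set missing $x$, and lets $U$ be its complement. Your explicit remarks about Hausdorffness and the reduction to open $V$ are just the details the paper leaves implicit.
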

\begin{proof}
  Let $C$ be the complement of the interior of $V$,
  which is compact and disjoint from $\inv{f}\{x\}$.
  Then $fC$ is compact and disjoint from $\{x\}$.
  Let $U$ be the complement of $fC$,
  which is a neighborhood of $x$.
  Then $\inv{f}U$ is disjoint from $C$ and, therefore,
  a subset of the interior of $V$.
\end{proof}

\begin{defn}
  Given a binary relation $R$:
  \begin{itemize}
  \item Let $\inv{R}$ denote the converse relation.
  \item Given also a set $A$,
    let $RA$ denote the set of all $b$ such that
    $aRb$ for some $a\in A$.
  \item Given also a binary relation $S$,
    let $SR$ denote the set of all pairs
    $(a,c)$ such that $aRbSc$ for some $b$.
  \end{itemize}
\end{defn}

\begin{proof}[Proof of Theorem~\ref{represent}]
  Suppose $a\not=b$, $Y$ is compact,
  $f\colon Y\to X$ is a quotient map,
  $g\colon Y\to Y$ is a homeomorphism,
  $\{c\}=\inv{f}\{a\}$, $\{d\}=\inv{f}\{b\}$, and $g(c)=d$.
  Let us construct $R$. 

  Let $A_1,B_1$ be disjoint neighborhoods of $a,b$.
  Choose a neighborhood $C_2$ of $c$ such that
  $C_2\subset \inv{f}A_1$ and $g(C_2)\subset \inv{f}B_1$.
  Applying Lemma~\ref{fibertube}, choose a closed
  neighborhood $A_3$ of $a$ such that $\inv{f}A_3\subset C_2$.
  Let $B_3=fg\inv{f}A_3$, which is compact
  because $Y$ is compact. Applying Lemma~\ref{fibertube} again,
  $B_3$ is a neighborhood of $b$.
  Also, $A_3$ and $B_3$ are disjoint because
  \[A_3\subset fC_2\subset A_1\text{ and }
  B_3=fg\inv{f}A_3\subset fgC_2\subset B_1.\]

  Let $T$ be the all $(p,q,r)\in X\times X\times Y$
  such that $f(r)=p$ and $f(g(r))=q$. This set is compact.
  Let $S$ be the set of all $(p,q)\in A_3\times B_3$
  such that $(p,q,r)\in T$ for some $r\in Y$.
  This set is also compact. And, since $B_3=fg\inv{f}A_3$,
  the domain and range of $S$ are $A_3$ and $B_3$.
  Moreover, $a$ is the unique $p$ satisfying $pSb$ and
  $b$ is the unique $q$ satisfying $aSq$.
  Finally,
  let $D$ be the closure of the complement of $A_3\cup B_3$.
  Then $R=S\cup\inv{S}\cup D^2$ is as desired.
\end{proof}

\begin{defn}
  Given $a,b,X,R$ as in Theorem~\ref{represent},
  we say that $R$ {\it represents} $a\pin b$ in $X$.
\end{defn}

Our representation theorem helps us prove
several nice properties of pin equivalence,
starting with the next lemma, which we will use many times.
This lemma allows us to use a relation $R$ as above like
a function that is continuous at $a$ and $b$.

\begin{lem}\label{likects}
  If $R$ represents $a\pin b$ in compact space $X$
  and $V$ is a neighborhood of $b$, then
  $a$ has a neighborhood $U$ such that $RU\subset V$.
\end{lem}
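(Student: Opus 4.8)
The plan is to apply Lemma~\ref{fibertube} to the first-coordinate projection restricted to $R$. Since $X$ is compact and $R$ is closed in $X^2$, the relation $R$ is itself a compact space, and the map $\pi\colon R\to X$ given by $\pi(x,y)=x$ is continuous (and in fact surjective, since $R$ has domain $X$, though surjectivity is not needed). \Wlog\ $V$ is open, since otherwise we may replace $V$ by its interior: proving $RU\subset\interior{V}\subset V$ suffices. The representing hypothesis that $aRx\biimp x=b$ says exactly that the fiber $\inv{\pi}\{a\}$ is the singleton $\{(a,b)\}$.

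Next I would exhibit a neighborhood of this fiber on which the second coordinate is trapped inside $V$. Set $W=R\cap(X\times V)$. Because $V$ is open, $X\times V$ is open in $X^2$, so $W$ is open in $R$; and $(a,b)\in W$ since $b\in V$. Thus $W$ is a neighborhood, relative to $R$, of the fiber $\inv{\pi}\{a\}=\{(a,b)\}$. Applying Lemma~\ref{fibertube} to the compact space $R$, the continuous map $\pi$, the point $a$, and the neighborhood $W$, I obtain a neighborhood $U$ of $a$ in $X$ with $\inv{\pi}U\subset W$.

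Finally I would unwind the conclusion. If $x\in U$ and $xRy$, then $(x,y)\in\inv{\pi}U\subset W\subset X\times V$, so $y\in V$. As $RU$ is by definition the set of all such $y$, this gives $RU\subset V$, as required.

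There is no serious obstacle here: the whole content is the observation that the representing conditions force the fiber of $\pi$ over $a$ to collapse to the single point $(a,b)$, after which the fiber tube lemma does all the work. The only point demanding the slightest care is the reduction to $V$ open, which is what guarantees that $W$ is genuinely a neighborhood of the fiber in $R$.
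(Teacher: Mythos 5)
Your proof is correct, but it takes a different route from the paper's. You view the relation $R$ as a compact space fibered over $X$ by the first-coordinate projection $\pi$, observe that the condition $aRx\biimp x=b$ collapses the fiber $\inv{\pi}\{a\}$ to the single point $(a,b)$, and then invoke Lemma~\ref{fibertube} with the open neighborhood $W=R\cap(X\times V)$ of that fiber; each step checks out, including the harmless reduction to $V$ open and the final unwinding $\inv{\pi}U\subset W\imp RU\subset V$. The paper instead argues by contradiction with nets: if no such $U$ existed, there would be nets $p_i\to a$ and $q_i\notin V$ with $p_iRq_i$; compactness of $X$ gives a cluster point $c$ of $(q_i)$, closedness of $R$ gives $aRc$, and $c\neq b$ since $c$ avoids the interior of $V$, contradicting $aRx\biimp x=b$. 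The two arguments use the same compactness in different clothing ($R$ closed in $X^2$ versus cluster points of nets), but yours has the virtue of recycling Lemma~\ref{fibertube}, which the paper proves anyway for Theorem~\ref{represent}, and it makes visible that only the condition at $a$ (not symmetry, not $bRx\biimp x=a$) is needed; the paper's version is self-contained and marginally shorter.
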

\begin{proof}
  Suppose not. Then there are nets
  $(p_i)_{i\in I}$, $(q_i)_{i\in I}$ in $X$ such that
  $p_i\to a$, $p_iRq_i$, and $q_i\not\in V$.
  Since $X$ is compact, $(q_i)_{i\in I}$ has a cluster point $c$.
  And $c\not=b$ since $c$ is not in the interior of $V$.
  Since $R$ is closed, $aRc$. But this contradicts
  $aRx\biimp x=b$.
\end{proof}

\begin{cor}\label{Rnets}
  Suppose $R$ represents $a\pin b$
  and net $(x_i)_{i\in I}$ converges to $a$.
  If $x_iRy_i$ for all $i\in I$,
  then $(y_i)_{i\in I}$ converges to $b$.
\end{cor}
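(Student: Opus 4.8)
The plan is to read the conclusion off directly from Lemma~\ref{likects}, which is exactly the tool built for this purpose. To prove $(y_i)_{i\in I}\to b$ it suffices, by the definition of net convergence, to show that for every neighborhood $V$ of $b$ the net $(y_i)_{i\in I}$ is eventually inside $V$.

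So I would fix such a $V$ and apply Lemma~\ref{likects} to obtain a neighborhood $U$ of $a$ with $RU\subset V$. Since $x_i\to a$, there is an index $i_0\in I$ past which $x_i\in U$. Then for every $i\geq i_0$ the hypothesis $x_iRy_i$, together with the definition of $RU$ as $\{q : pRq\text{ for some }p\in U\}$, gives $y_i\in RU\subset V$. Hence $(y_i)_{i\in I}$ is eventually in $V$, which is what we needed.

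There is no genuine obstacle here: the corollary is little more than an unpacking of Lemma~\ref{likects}, with the substantive work of exploiting closedness of $R$ and the defining property $aRx\biimp x=b$ already carried out inside that lemma. The only point deserving a moment's attention is the bookkeeping around $RU$, namely that $x_i\in U$ and $x_iRy_i$ immediately yield $y_i\in RU$. In particular, neither symmetry nor closedness of $R$ is invoked again in this step; both have already been absorbed into Lemma~\ref{likects}, so the corollary is purely a convergence reformulation of that lemma.
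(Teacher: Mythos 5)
Your proof is correct and matches the paper's intent exactly: the paper states this as a corollary of Lemma~\ref{likects} with no written proof, and your argument is precisely the straightforward unpacking (choose $U$ with $RU\subset V$, note $x_i$ is eventually in $U$, hence $y_i\in RU\subset V$ eventually) that the paper leaves implicit.
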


\begin{cor}\label{Ruf}
  Suppose, in a compact space,
  that $R$ represents $a\pin b$,
  $\mc{U}$ is an ultrafilter on a set $I$,
  and $\lim_{i\to\mc{U}}x_i=a$.
  If $x_iRy_i$ for all $i\in I$,
  then $\lim_{i\to\mc{U}}y_i=b$.
\end{cor}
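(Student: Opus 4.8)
The plan is to mirror the proof of Corollary~\ref{Rnets}, replacing ``eventually'' by ``on a set in $\mc{U}$'' and using only the upward closure of the ultrafilter in place of any cluster-point argument. The sole nontrivial input is again Lemma~\ref{likects}, which converts a neighborhood $V$ of $b$ into a neighborhood $U$ of $a$ whose forward $R$-image is trapped inside $V$.

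First I would fix an arbitrary neighborhood $V$ of $b$ and invoke Lemma~\ref{likects} to obtain a neighborhood $U$ of $a$ with $RU\subset V$. Since $\lim_{i\to\mc{U}}x_i=a$, the set $J=\{i\in I \suchthat x_i\in U\}$ belongs to $\mc{U}$. For each $i\in J$, the hypothesis $x_iRy_i$ places $y_i$ in $RU$, hence in $V$, so $J\subset\{i\in I \suchthat y_i\in V\}$. Because $\mc{U}$ is upward closed, $\{i\in I \suchthat y_i\in V\}\in\mc{U}$. As $V$ was an arbitrary neighborhood of $b$, this is exactly the assertion $\lim_{i\to\mc{U}}y_i=b$.

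I do not expect a genuine obstacle here: unlike Lemma~\ref{likects} itself, this argument needs neither compactness nor net clustering, and the only properties of $\mc{U}$ it uses are the definition of the $\mc{U}$\nbd-limit and closure under supersets. The one point to watch is purely notational, namely keeping the direction of $R$ straight: it is the combination of $x_i\in U$ with $x_iRy_i$ that certifies $y_i\in RU$, matching the form in which Lemma~\ref{likects} delivers its conclusion. One could instead reduce to Corollary~\ref{Rnets} by passing to the canonical net indexed by pairs $(i,W)$ with $W$ a neighborhood of $a$ satisfying $x_i\in W$, but the direct argument above is shorter and spares us from re\nbd-verifying that such a net converges to $a$.
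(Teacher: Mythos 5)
Your argument is correct and is exactly the intended one: the paper leaves Corollary~\ref{Ruf} without proof precisely because it follows from Lemma~\ref{likects} in the way you describe, converting a neighborhood $V$ of $b$ into a neighborhood $U$ of $a$ with $RU\subset V$ and then using only the definition of the $\mc{U}$\nbd-limit and upward closure of $\mc{U}$. No changes needed.
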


\begin{thm}
  Pin equivalence is transitive.  
\end{thm}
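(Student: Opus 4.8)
The plan is to prove transitivity through the representation theorem. Suppose $a\pin b$ and $b\pin c$ in a compact space $X$; I want $a\pin c$. If two of $a,b,c$ coincide the conclusion is immediate, since $\pin$ is reflexive (take $Y=X$, $f=g=\id$) and one of the two hypotheses is then already the desired conclusion; so assume $a,b,c$ are distinct. By Theorem~\ref{represent} fix a relation $R$ representing $a\pin b$ and a relation $S$ representing $b\pin c$ (note ``represents'' is symmetric in its pair, since the defining conditions are). By the converse direction of Theorem~\ref{represent} it then suffices to manufacture a symmetric closed relation $T$ with domain $X$ such that $aTx\biimp x=c$ and $cTx\biimp x=a$.

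The natural candidate is the composite $SR$. It is closed, being the projection onto the outer coordinates of the compact set $\{(p,q,r)\in X^3 : pRq \text{ and } qSr\}$ (the same projection-of-a-compactum device used in the proof of Theorem~\ref{represent}), and because $R$ and $S$ are each total, $SR$ has full domain and full range. Moreover $(SR)\{a\}=\{c\}$ and $\inv{(SR)}\{c\}=\{a\}$: for $a(SR)x$ the intermediate point must be the unique $R$\nbd partner $b$ of $a$, and $bSx\biimp x=c$; the second identity is symmetric. So $SR$ already behaves correctly at $a$ from one side and at $c$ from the other. The obstruction is that $SR$ is neither symmetric nor controlled at the opposite ends: $\inv{(SR)}\{a\}=(RS)\{a\}$ is governed by $S\{a\}$, and $a$ is a wholly generic point for $S$, so this set can be large; likewise $(SR)\{c\}$. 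Symmetrizing to $SR\cup\inv{(SR)}$ restores symmetry but imports exactly these unwanted pairs at $a$ and $c$, and one cannot simply delete them, because excising $\{a\}\times(X\setminus\{c\})$ and its relatives from a closed relation destroys closedness.

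The fix, mirroring the proof of Theorem~\ref{represent} itself, is to keep $SR$ only on a box around $(a,c)$ and fill the rest with a large square. Concretely I would choose disjoint closed neighborhoods $U_a\ni a$ and $U_c\ni c$, set $T_0=SR\cap(U_a\times U_c)$, and put $T=T_0\cup\inv{T_0}\cup E^2$, where $E=X\setminus(G_a\cup G_c)$ for tiny open neighborhoods $G_a\ni a$, $G_c\ni c$ with $G_a\subseteq U_a$ and $G_c\subseteq U_c$. Symmetry and closedness of $T$ are immediate. Since $a,c\notin E$ and $U_a\cap U_c=\vn$, the rows of $T$ at $a$ and at $c$ meet only $T_0$ and only $\inv{T_0}$ respectively, so the two identities $(SR)\{a\}=\{c\}$ and $\inv{(SR)}\{c\}=\{a\}$ deliver precisely $aTx\biimp x=c$ and $cTx\biimp x=a$.

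The one genuinely delicate point is totality: every point must have a $T$\nbd partner. Points of $E$ are caught by the diagonal of $E^2$, so only $G_a\cup G_c$ remains, and these must be caught by $T_0$ and $\inv{T_0}$. Two facts accomplish this. First, totality of $R$ and $S$ makes $(SR)\{x\}$ and $\inv{(SR)}\{x\}$ nonempty for every $x$. Second, two applications of Lemma~\ref{likects} show $SR$ is ``like\nbd continuous'' at $a$ and $\inv{(SR)}$ at $c$: given a neighborhood $U_c$ of $c$, Lemma~\ref{likects} applied to $S$ yields a neighborhood $W$ of $b$ with $SW\subseteq U_c$, and applied to $R$ yields a neighborhood $G_a$ of $a$ with $RG_a\subseteq W$, whence $(SR)G_a\subseteq U_c$; symmetrically one gets $\inv{(SR)}G_c\subseteq U_a$. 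The trick that avoids any circular choice of neighborhoods is to fix $U_a,U_c$ first and only afterwards shrink $G_a,G_c$ so small that $(SR)G_a\subseteq U_c$ and $\inv{(SR)}G_c\subseteq U_a$; then each $x\in G_a$ has a nonempty $SR$\nbd image inside $U_c$, giving an edge of $T_0$, and symmetrically for $G_c$. I expect the single\nbd point identities to be routine once the box structure is in place; the step needing real care is exactly this neighborhood selection securing totality without circularity, which is why I reverse the usual order and fill the complement of the \emph{tiny} sets $G_a\cup G_c$ rather than of $U_a\cup U_c$.
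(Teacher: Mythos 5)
Your proof is correct, and it reaches the same destination as the paper's by a genuinely different patching scheme. Both arguments reduce to distinct $a,b,c$, take relations $R$ and $S$ representing $a\pin b$ and $b\pin c$ via Theorem~\ref{represent}, and assemble the new relation from a composite of $R$ and $S$ plus a large square whose only job is to make the domain all of $X$. The paper trims first and composes second: it removes from $R$ all pairs near $c$ and from $S$ all pairs near $a$, takes $T=\hat{R}\hat{S}\cup\hat{S}\hat{R}\cup B_1^2$, and so places the filler square around the \emph{middle} point $b$, leaving the two composites responsible for covering everything outside $B_1$. You compose first and trim second: you keep $SR$ only on a box $U_a\times U_c$ around $(a,c)$, symmetrize, and fill with $E^2$ where $E$ omits only tiny neighborhoods $G_a,G_c$ of the \emph{outer} points; the square then covers almost everything, and the composite need only catch $G_a\cup G_c$, which two applications of Lemma~\ref{likects} (legitimately applied in both directions, since the representing conditions are symmetric in the pair) guarantee. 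Your arrangement is in effect a direct instance of Lemma~\ref{localpin}, which the paper states immediately after this theorem precisely to isolate this recurring pattern --- your $T_0$ is exactly the local relation of that lemma. What your version buys is an essentially trivial totality check (diagonal of $E^2$ plus local control near $a$ and $c$ where Lemma~\ref{likects} applies), at the cost of the composite contributing nothing away from $(a,c)$; the paper's version makes the composites do the covering globally and must track that the trimming does not destroy partners. Your remark that $U_a,U_c$ must be fixed before $G_a,G_c$ are shrunk is exactly the right move to avoid circularity, and your identification of $\inv{(SR)}\{a\}=(RS)\{a\}$ as the obstruction to naive symmetrization correctly pinpoints why some trimming is unavoidable.
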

\begin{proof}
  Suppose $a\pin b\pin c$ in compact space $X$.
  Let us show that $a\pin c$.
  \Wma\ $a,b,c$ are distinct.
  Let $R,S$ represent $a\pin b$ and $b\pin c$.
  Let $A_1,B_1,C_1$ be disjoint closed neighborhoods of $a,b,c$.
  Let $A_2,C_2$ be open neighborhoods of $a,c$ such that
  $A_2\subset A_1$, $C_2\subset C_1$,
  $RA_2\subset B_1$, and $SC_2\subset B_1$.
  Define relations $\hat{R},\hat{S},T$ as follows.
  \begin{align*}
    \hat{R}&=R\cap(X\setminus C_2)^2\\
    \hat{S}&=S\cap(X\setminus A_2)^2\\
    T&=\hat{R}\hat{S}\cup\hat{S}\hat{R}\cup B_1^2
  \end{align*}
  Then $T$ is symmetric, 
  \(aTx\biimp x=c\), and \(cTx\biimp x=a\).
  By compactness, $T$ is also closed.  
  We just need to show that $T$ has domain $X$.

  Fix $x\in X$. First, suppose $x\not\in A_1\cup B_1$.
  We then have $x\hat{S}y$ for some $y$. Moreover,
  $y\not\in C_2$ because $SC_2\subset B_1$ and $ySx$.
  Therefore, $y\hat{R}z$ for some $z$.
  Thus, $x\in\dom(\hat{R}\hat{S})$.
  If instead $x\not\in C_1\cup B_1$,
  then $x\in\dom(\hat{S}\hat{R})$
  by analogous reasoning.
  In the only remaining case, $x\in B_1$,
  we have $x\in\dom(B_1^2)$. Thus, $X=\dom(T)$.
\end{proof}

The theorem says that pin equivalence
is a local property.

\begin{thm}
  Let $Y,Z$ be closed subspaces of a compact space $X$.
  Suppose $a,b$ are in the interior of $Y\cap Z$
  and $a\pin b$ in $Y$. Then $a\pin b$ in $Z$.
\end{thm}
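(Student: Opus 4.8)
The plan is to use the representation theorem (Theorem~\ref{represent}) at both ends: extract from the hypothesis a symmetric relation $R\subseteq Y^2$, closed in $Y^2$ with domain $Y$, that represents $a\pin b$ in $Y$, and then manufacture a symmetric relation $R_Z\subseteq Z^2$, closed in $Z^2$ with domain $Z$, that represents $a\pin b$ in $Z$. The guiding idea is that pin equivalence of $a$ and $b$ is a purely local phenomenon: all that matters about $R$ is its behavior on a neighborhood of $a$ and of $b$, and since $a,b\in\interior{(Y\cap Z)}$ (interior taken in $X$), we can confine that behavior to a common open set $W=\interior{(Y\cap Z)}\subseteq Y\cap Z$, on which the subspace topologies of $Y$ and $Z$ agree. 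The local part of $R$ then transplants verbatim into $Z$, and we pad out the remaining domain with a square of ``junk,'' exactly as in the proof of Theorem~\ref{represent}.

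Concretely, first I would fix disjoint closed sets $N_a,N_b\subseteq W$ that are neighborhoods of $a,b$. Applying Lemma~\ref{likects} to $R$ (which represents both $a\pin b$ and, by symmetry, $b\pin a$ in $Y$), I would shrink to closed neighborhoods $A\subseteq N_a$ of $a$ and $B\subseteq N_b$ of $b$ with $RA\subseteq N_b$ and $RB\subseteq N_a$. This is the crucial step: it forces every $R$\nbd pair meeting $A\cup B$ to lie inside $W\times W\subseteq Z^2$. I would then set
\[
S_0=\bigl(R\cap(A\times X)\bigr)\cup\bigl(R\cap(B\times X)\bigr),\qquad S=S_0\cup\inv{S_0},
\]
a symmetric relation contained in $Z^2$ and closed there (using that $R$ is closed in $Y^2$, hence in $X^2$ since $Y$ is closed, and that $A,B$ are closed). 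One checks that $aSx\biimp x=b$ and $bSx\biimp x=a$, since the only $R$\nbd neighbors of $a$ and $b$ are $b$ and $a$, and $a\in A$, $b\in B$ while $A,B$ are disjoint.

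Finally, to repair the domain I would put $D=\closure{\bigl(Z\setminus(A\cup B)\bigr)}$ and define $R_Z=S\cup D^2$. Because $a,b$ lie in the $Z$\nbd interiors of the disjoint sets $A,B$, neither belongs to $D$, so adjoining $D^2$ does not disturb the neighbor conditions at $a$ or $b$; moreover $\dom R_Z\supseteq(A\cup B)\cup D=Z$. Thus $R_Z$ represents $a\pin b$ in $Z$, and Theorem~\ref{represent} yields the conclusion. I expect the only real obstacle to be the confinement step: verifying that Lemma~\ref{likects}, together with $a,b\in\interior{(Y\cap Z)}$, genuinely traps the relevant fragment of $R$ inside the common neighborhood $W$, so that a relation built on $Y$ may legitimately be reread as a relation on $Z$ (the agreement of the $Y$\nbd and $Z$\nbd subspace topologies on the $X$\nbd open set $W$ is what makes $A,B$ simultaneously neighborhoods in both). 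The domain-filling and the verification of the neighbor conditions are then routine adaptations of the proof of Theorem~\ref{represent}.
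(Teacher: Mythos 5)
Your proposal is correct and follows essentially the same route as the paper's proof: restrict the representing relation $R$ to small closed neighborhoods of $a$ and $b$ trapped (via Lemma~\ref{likects}) inside the common open set $\interior{(Y\cap Z)}$, symmetrize, and pad the domain with the square of the closed complement in $Z$. The only cosmetic difference is that the paper cuts $R$ down to $R\cap(A_1\times B_1)\cup R\cap(B_1\times A_1)$ directly rather than forming $R\cap((A\cup B)\times X)$ and adding its converse, which amounts to the same relation given $RA\subset B_1$ and $RB\subset A_1$.
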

\begin{proof}
  Let $R$ represent $a\pin b$ in $Y$.
  Let $A_1,B_1$ be closed neighborhoods of $a,b$
  in $Y\cap Z$. Let $A_2,B_2$ be open neighborhoods of $a,b$
  such that $A_2\subset A_1$, $B_2\subset B_1$,
  $RA_2\subset B_1$, and $RB_2\subset A_1$.
  Let $S=R\cap(A_1\times B_1)$, $T=R\cap(B_1\times A_1)$,
  and $U=S\cup T\cup C^2$ where $C=Z\setminus(A_2\cup B_2)$.
  Then $U$ represents $a\pin b$ in $Z$.
\end{proof}

The next lemma isolates a recurring technique from the
proofs of the above theorems.

\begin{lem}\label{localpin}
  Given distinct $a,b$ in a compact space $X$, 
  there exists $R$ that represents $a\pin b$ iff
  there exists a closed binary relation $S$ on $X$
  such that $\dom(S)$ is a neighborhood of $a$,
  $\ran(S)$ is a neighborhood of $b$ disjoint from $\dom(S)$,
  $xSb\biimp x=a$, and $aSy\biimp y=b$.
\end{lem}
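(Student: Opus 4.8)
The plan is to prove the two implications separately, leaning on Lemma~\ref{likects} for the easy forward direction and on the now-familiar ``$S\cup\inv{S}\cup D^2$'' construction (seen already in the proofs of transitivity and locality) for the reverse.

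For the forward direction, suppose $R$ represents $a\pin b$. Since $X$ is compact Hausdorff, hence regular, fix disjoint closed neighborhoods $A,B$ of $a,b$. Because $R$ is symmetric it represents $b\pin a$ as well, so two applications of Lemma~\ref{likects} produce an open neighborhood $U_a\subseteq A$ of $a$ with $RU_a\subseteq B$ and an open neighborhood $U_b\subseteq B$ of $b$ with $RU_b\subseteq A$. I would then take $S=R\cap(A\times B)$. This $S$ is closed, and $\dom(S)\subseteq A$ and $\ran(S)\subseteq B$ are disjoint; the biconditionals $xSb\biimp x=a$ and $aSy\biimp y=b$ follow at once from the hypotheses on $R$ together with $a\in A$ and $b\in B$. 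The one point carrying content is that $\dom(S)$ and $\ran(S)$ are neighborhoods: for $x\in U_a$ the relation $R$ (having domain $X$) supplies some $y$ with $xRy$, and $RU_a\subseteq B$ forces $y\in B$, whence $(x,y)\in S$ and $U_a\subseteq\dom(S)$; by symmetry $U_b\subseteq\ran(S)$.

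For the reverse direction, suppose $S$ is as in the statement and set $P=\dom(S)$, $Q=\ran(S)$, so that $P,Q$ are neighborhoods of $a,b$ with $P\cap Q=\vn$. I would let $D=\closure{X\setminus(P\cup Q)}$ and define $R=S\cup\inv{S}\cup D^2$. Symmetry is immediate; closedness follows because $S$, and hence $\inv S$, is closed while $D^2=D\times D$ is closed. The domain of $R$ contains $\dom(S)\cup\ran(S)\cup D=P\cup Q\cup D=X$, as required.

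The main obstacle---indeed the only place any care is needed---is verifying $aRx\biimp x=b$ and $bRx\biimp x=a$, which hinges on showing $a\notin D$ and $b\notin D$. Here the \emph{neighborhood} hypotheses are essential: since $P$ is a neighborhood of $a$, the point $a$ lies in the open set $\interior{P}$, which is disjoint from $X\setminus(P\cup Q)$, so $a\notin\closure{X\setminus(P\cup Q)}=D$; symmetrically $b\notin D$. Granting this, and using $a\notin Q$ and $b\notin P$ (from $P\cap Q=\vn$), I observe that $aRx$ can arise only from $aSx$, which holds iff $x=b$, and that $bRx$ can arise only from $b\inv S x$, i.e.\ $xSb$, which holds iff $x=a$. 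This establishes both biconditionals and shows $R$ represents $a\pin b$.
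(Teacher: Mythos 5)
Your proof is correct and follows essentially the same route as the paper's: the reverse direction uses the identical formula $R=S\cup\inv{S}\cup\bigl(\closure{X\setminus(\dom(S)\cup\ran(S))}\bigr)^2$, and the forward direction, like the paper's, cuts $R$ down to a product of small disjoint closed neighborhoods supplied by Lemma~\ref{likects} (your $R\cap(A\times B)$ versus the paper's $R\cap((U\times RU)\cup(RV\times V))$ is an immaterial variation). Your write-up is in fact more detailed than the paper's one-line proof, in particular in checking that $\dom(S)$ and $\ran(S)$ really are neighborhoods and that $a,b\notin\closure{X\setminus(\dom(S)\cup\ran(S))}$.
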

\begin{proof}
  Given $R$, let $S=R\cap((U\times RU)\cup(RV\times V))$
  for sufficiently small closed neighborhoods $U,V$ of $a,b$.
  Given instead $S$, let
  \[R=S\cup\inv{S}\cup\left(\closure{X\setminus
    (\dom(S)\cup\ran(S))}\right)^2.\qedhere\]
\end{proof}

\section{Pin equivalence vs. Tukey equivalence}
Here we show that pin equivalence
strictly implies Tukey equivalence.

\begin{defn}
  A {\it directed set} is a nonempty set $S$ equipped
  with a transitive reflexive relation $\leq$
  such that for all $x,y\in S$ there exists $z\in S$
  such that $x,y\leq z$.
\end{defn}

\begin{defn}
  Given two directed sets $P, Q$:
  \begin{itemize}
  \item 
    We say $P$ is {\it Tukey below} $Q$ and write $P\leqtuk Q$
    if there exists $f\colon Q\to P$ that is {\it convergent},
    that is, for every $p_0\in P$ there exists $q_0\in Q$
    such that $f(q)\geq p_0$ for all $q\geq q_0$.
  \item We say $P$ is {\it Tukey equivalent} to $Q$
    and write $P\eqtuk Q$ if $P\leqtuk Q\leqtuk P$.
  \item A subset $U$ of $P$ is {\it unbounded} if
    has no upper bound in $P$.
  \item A subset $C$ of $P$ is {\it cofinal} if
    for every $p\in P$ has an upper bound in $C$.
  \item The {\it cofinality} $\cf(P)$ of $P$ is
    least of the cardinalities of cofinal subsets of $P$.
  \item Given a cardinal $\ka$,
    we say $P$ is {\it $\ka$-directed} if
    every subset of $P$ size less than $\ka$
    has an upper bound in $P$.
  \item Given a cardinal $\ka$,
    we say $P$ is {\it $\ka$-OK} if, for each $f\colon\om\to P$
    there exists $g\colon\ka\to P$ such that
    for every $n<\om$,
    every increasing $n$-tuple $\xi_1<\cdots<\xi_n<\ka$,
    and every upper bound $b\in P$
    of $\{g(\xi_1),\ldots,g(\xi_n)\}$,
    we have $f(n)\leq b$.
  \end{itemize}
\end{defn}

Below are some elementary consequences of the above definitions.
\begin{itemize}
%\item $P\leqtuk Q$ iff there exists $f\colon P\to Q$
%  that maps sets not bounded above to sets not bounded above.
\item Composition preserves convergence.
\item If $C$ is a cofinal subset of $P$, then $C\eqtuk P$.
\item If $P\leqtuk Q$, then $\cf(P)\leq\cf(Q)$.
\item If $Q$ is $\ka$-directed and $P\leqtuk Q$,
  then $P$ is $\ka$-directed.
\item If $P$ is $\lm$-OK and $\ka\leq\lm$,
  then $P$ is $\ka$-OK.
\item If $\cf(P)\leq\ka$, then $P\leqtuk [\ka]^{<\alo}$
  where $[S]^{<\alo}$
  denotes the finite subsets of $S$
  ordered by inclusion $(\subset)$.
\end{itemize}

\begin{lem}\label{tukeytop}
  If $P$ is $\ka$-OK but not $\oml$-directed,
  then $[\ka]^{<\alo}\leqtuk P$.
\end{lem}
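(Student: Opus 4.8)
The plan is to exhibit a convergent map $\phi\colon P\to[\ka]^{<\alo}$, which by definition is exactly what $[\ka]^{<\alo}\leqtuk P$ asks for. First I would extract the two ingredients the hypotheses hand me. Since $P$ is not $\oml$\nbd-directed, some (necessarily infinite) countable subset of $P$ has no upper bound; enumerating it gives a function $f\colon\om\to P$ whose range is unbounded. Feeding this $f$ into the $\ka$\nbd-OK property produces a function $g\colon\ka\to P$ such that, for every $n<\om$, every increasing tuple $\xi_1<\cdots<\xi_n<\ka$, and every upper bound $b$ of $\{g(\xi_1),\ldots,g(\xi_n)\}$, we have $f(n)\leq b$. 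The guiding intuition is that $g$ spreads $\ka$\nbd-many elements of $P$ so thinly that no single element of $P$ can simultaneously dominate more than finitely many of them.

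Making that intuition precise is the crux. I would define $\phi(p)=\{\xi<\ka\suchthat g(\xi)\leq p\}$. The main step---and the only place the full strength of $\ka$\nbd-OK is used---is to show that each $\phi(p)$ is finite, so that $\phi$ genuinely lands in $[\ka]^{<\alo}$. Suppose instead $\phi(p)$ were infinite for some $p$. Then for every $n$ I could select an increasing tuple $\xi_1<\cdots<\xi_n$ from $\phi(p)$; since each $g(\xi_i)\leq p$, the point $p$ is an upper bound of $\{g(\xi_1),\ldots,g(\xi_n)\}$, so the OK property yields $f(n)\leq p$. As $n$ was arbitrary, $p$ would bound the entire range of $f$, contradicting its unboundedness.

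Once finiteness is in hand, convergence of $\phi$ should be routine. Given $F_0\in[\ka]^{<\alo}$, directedness of $P$ lets me pick an upper bound $p_0$ of the finite set $\{g(\xi)\suchthat\xi\in F_0\}$; then any $p\geq p_0$ satisfies $g(\xi)\leq p$ for each $\xi\in F_0$, that is, $F_0\subseteq\phi(p)$, which is precisely $\phi(p)\geq F_0$ in $[\ka]^{<\alo}$. Hence $\phi$ is convergent and $[\ka]^{<\alo}\leqtuk P$, as desired.

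I expect the finiteness step to be the main obstacle, not because it is computationally hard but because it is the conceptual heart: it is where ``$P$ is spread out by $g$'' gets converted into a cardinality bound, and it is what forces the OK condition to be invoked at arbitrarily large tuple-lengths $n$ rather than at a single fixed length. Everything else is bookkeeping with the definitions of convergent map and of the inclusion order on $[\ka]^{<\alo}$, and I would not dwell on it.
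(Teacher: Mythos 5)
Your proof is correct and follows exactly the paper's argument: take $f$ with unbounded range, get $g$ from the $\ka$-OK property, and send $p$ to $\{\xi<\ka\suchthat g(\xi)\le p\}$; your finiteness argument is precisely the paper's remark that $g$ maps each infinite subset of $\ka$ to an unbounded set. You have simply spelled out the details the paper leaves implicit.
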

\begin{proof}
  Let $f$ map $\om$ to an unbounded subset of $P$.
  Let $g$ be as in the definition of $\ka$-OK.
  Then $g$ maps each infinite subset of $\ka$ to an unbounded set.
  To obtain a convergent map from $P$ to $[\ka]^{<\alo}$,
  map each $p\in P$ to the set of all
  $\xi<\ka$ satisfying $g(\xi)\leq p$.
\end{proof}

Through neighborhood filters, the order concepts
defined above induce the topological concepts defined next.

\begin{defn}
  Given points $a,b$ in space $X$:
  \begin{itemize}
  \item 
    We denote by $\mc{N}_X(a)$ the {\it neighborhood filter}
    of $a$, that is, the set of all $N\subset X$
    with $a$ in the interior of $N$.
    We make $\mc{N}_X(a)$ a directed set by
    ordering it by containment ($\supset$).
  \item 
    We say $a$ is Tukey below (resp., Tukey equivalent to)
    $b$ if $\mc{N}_X(a)\leqtuk\mc{N}_X(b)$
    (resp., $\mc{N}_X(a)\eqtuk\mc{N}_X(b)$).
  \item 
    We denote by $\character{a,X}$, the {\it character} of $a$,
    which is the cofinality $\cf(\mc{N}_X(a))$
    of $a$'s neighborhood filter.
  \item Given a cardinal $\ka$, we say $a$ is $\ka$-OK
    if its neighborhood filter is.
  \end{itemize}
\end{defn}

\begin{thm}
  If $a\pin b$ in compact space $X$, then $a\eqtuk b$.
\end{thm}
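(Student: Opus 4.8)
The goal is to show $\mc{N}_X(a)\eqtuk\mc{N}_X(b)$. Since Theorem~\ref{represent} furnishes a symmetric closed relation $R$ representing $a\pin b$, and since symmetry makes the two defining conditions interchangeable in $a$ and $b$ (so that $R$ equally represents $b\pin a$), it suffices to produce a convergent map witnessing $\mc{N}_X(a)\leqtuk\mc{N}_X(b)$; the reverse inequality then follows by the same argument with the roles of $a$ and $b$ swapped.

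My plan is to use $R$ as a surrogate for a continuous function and define $\phi\colon\mc{N}_X(b)\to\mc{N}_X(a)$ by $\phi(V)=\{p\in X\suchthat Rp\subset\interior V\}$. First I would check that $\phi(V)$ is a legitimate neighborhood of $a$: because $R$ is closed and $X$ is compact, $R$ carries the compact set $X\setminus\interior V$ to a closed set, so $\phi(V)=X\setminus R(X\setminus\interior V)$ is open; and $a\in\phi(V)$ because $Ra=\{b\}\subset\interior V$. Monotonicity of $\phi$ (with respect to $\supset$) is immediate from the definition, so to prove $\phi$ convergent it suffices to show that for every neighborhood $U_0$ of $a$ there is a neighborhood $V_0$ of $b$ with $\phi(V_0)\subset U_0$.

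The crux is this last containment, which I would prove by contradiction using the net machinery already in hand. If it failed, then for every neighborhood $V$ of $b$ there would be a point $p_V\in\phi(V)\setminus U_0$; since $\dom(R)=X$ we may pick $y_V\in Rp_V$, and $Rp_V\subset\interior V$ forces $y_V\in\interior V$. Indexing by the neighborhood filter of $b$ ordered by $\supset$, the net $(y_V)$ converges to $b$, while $y_VRp_V$ holds by symmetry. Applying Corollary~\ref{Rnets} to $R$ in its role representing $b\pin a$, the net $(p_V)$ must then converge to $a$; but this contradicts $p_V\in X\setminus U_0$, a closed set missing $a$. Thus $\phi$ is convergent and $\mc{N}_X(a)\leqtuk\mc{N}_X(b)$, which by the symmetry noted above yields $\mc{N}_X(a)\eqtuk\mc{N}_X(b)$. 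The main obstacle is packaged into this final step: confirming that the canonical-looking map $\phi$ actually lands in $\mc{N}_X(a)$ (handled by closedness of the $R$-image) and that it genuinely shrinks neighborhoods of $b$ to neighborhoods of $a$ (handled by Corollary~\ref{Rnets}).
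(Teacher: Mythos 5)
Your proof is correct and takes essentially the same route as the paper: the paper defines the convergent map $U\mapsto RU$ from $\mc{N}_X(a)$ to $\mc{N}_X(b)$ and cites Lemma~\ref{likects}, whereas you use the mirror-image map $V\mapsto\{p\suchthat Rp\subset \interior{V}\}$ in the opposite direction and re-derive the needed continuity at $a,b$ from Corollary~\ref{Rnets}, with the symmetry of $R$ supplying the reverse Tukey inequality in both cases. (Minor nit: take $U_0$ open, or replace $X\setminus U_0$ by $X\setminus\interior{U_0}$, so that the set avoided by the net $(p_V)$ is actually closed.)
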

\begin{proof}
  Let $R$ represent $a\pin b$. \Ists\ $b\leqtuk a$.
  Define $r\colon\mc{N}_X(a)\to\mc{N}_X(b)$ by $r(U)=RU$.
  By Lemma~\ref{likects}, $r$ is convergent.
\end{proof}

\begin{defn}
  Given a point $a$ in a space $X$:
  \begin{itemize}
  \item We say $a$ is a {\it P-point}
    if $\mc{N}_X(a)$ is $\oml$-directed.
  \item We say $a$ is a {\it weak P-point}
    if $X\setminus C\in\mc{N}_X(a)$
    for every countable $C\subset X\setminus\{a\}$.
  \end{itemize}
\end{defn}

\begin{thm}
  In a compact space $X$, if $a\pin b$ and
  $a$ is not a weak P-point, then neither is $b$.
\end{thm}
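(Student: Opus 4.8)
The plan is to use the symmetric closed relation $R$ furnished by Theorem~\ref{represent} as a map that is ``continuous at $a$ and $b$,'' transporting a witness to $a$'s failure of being a weak P-point into a corresponding witness for $b$. Unwinding the definition, $a$ fails to be a weak P-point exactly when there is a countable set $C\subset X\setminus\{a\}$ with $a\in\closure{C}$ (equivalently, $X\setminus C\notin\mc{N}_X(a)$). So it suffices to produce an analogous countable $D\subset X\setminus\{b\}$ with $b\in\closure{D}$.

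To that end, fix such a $C=\{c_n\suchthat n<\om\}$ and let $R$ represent $a\pin b$. Because $\dom(R)=X$, I may choose, for each $n$, a point $y_n$ with $c_nRy_n$; set $D=\{y_n\suchthat n<\om\}$. First note $D\subset X\setminus\{b\}$: if $y_n=b$, then $c_nRb$, so by symmetry $bRc_n$, whence $c_n=a$ by the defining property $bRx\biimp x=a$, contradicting $c_n\in C\subset X\setminus\{a\}$.

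It remains to show $b\in\closure{D}$, and this is where the representation theorem does the real work. The idea is to find a single ultrafilter along which the $c_n$ converge to $a$ and then push it through $R$. Concretely, the map $n\mapsto c_n$ extends to a continuous map $\beta\om\to X$ whose image is $\closure{C}$; since $a\in\closure{C}$, there is an ultrafilter $\mc{U}$ on $\om$ with $\lim_{n\to\mc{U}}c_n=a$. (As $a\ne c_n$ for every $n$, this $\mc{U}$ is necessarily free, though I will not need that.) Now Corollary~\ref{Ruf}, applied to the relation $c_nRy_n$, yields $\lim_{n\to\mc{U}}y_n=b$; hence every neighborhood of $b$ contains $y_n$ for $\mc{U}$\nbd-many (in particular, at least one) $n$, so $b\in\closure{D}$. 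Thus $D$ witnesses that $b$ is not a weak P-point.

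The only genuinely delicate step is the passage from ``$a$ is a limit point of the countable set $C$'' to an honest ultrafilter $\mc{U}$ with $\lim_{n\to\mc{U}}c_n=a$, so that Corollary~\ref{Ruf} can be invoked: a mere cluster point will not do, since $R$ is only a closed relation and need not carry cluster points to cluster points. If one prefers to avoid $\beta\om$, the same argument runs with nets and Corollary~\ref{Rnets}: take a net in $C$ converging to $a$, select $R$\nbd-images via a choice function $s$ with $x\mathrel{R}s(x)$, and observe that $s$ sends this net to a net converging to $b$ whose terms all lie in the countable set $\{s(c_n)\suchthat n<\om\}$.
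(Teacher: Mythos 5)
Your proof is correct and takes essentially the same route as the paper: choose $R$\nbd-images $y_n$ of the points $c_n$ and push convergence to $a$ through $R$; the paper packages this as a net indexed by $\mc{N}_X(a)$ together with Corollary~\ref{Rnets}, rather than an ultrafilter on $\om$ together with Corollary~\ref{Ruf}, which is exactly the variant you mention at the end. The only substantive addition is that you spell out, via the symmetry of $R$, why each $y_n\not=b$, a step the paper leaves implicit.
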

\begin{proof}
  Let $a\in\closure{\{x_n\suchthat n<\om\}}$
  but $x_n\not=a$ for all $n<\om$.
  Letting some $R$ represent $a\pin b$,
  choose $y_n$ such that $x_nRy_n$, for each $n<\om$.
  Then $y_n\not=b$ for all $n<\om$.
  For each $N\in\mc{N}_X(a)$,
  choose $x_{\varphi(N)}\in N$,
  thus defining a net converging to $a$.
  Then $y_{\varphi(N)}\to b$ by Corollary~\ref{Rnets}.
  Hence, $b\in\closure{\{y_n\suchthat n<\om\}}$.
\end{proof}

Kunen proved that the \v Cech-Stone remainder $\om^*$
has weak P-points, a fact that previously
was merely known to be consistent with ZFC.
His proof consists of an easy result followed by a hard result:

\begin{lem}[Kunen~\cite{kunenok}]\label{omlOKweakP}
  In a space, if a point is $\oml$-OK,
  then it is also a weak P-point.
\end{lem}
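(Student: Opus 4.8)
The plan is to establish the contrapositive: if $a$ is not a weak P-point, then $a$ fails to be $\oml$-OK. The first and most important step is to unwind what OK-ness means in the concrete directed set $\mc{N}_X(a)$, whose order is reverse containment $\supset$. There a neighborhood $b$ is an upper bound of a finite set $\{g(\xi_1),\ldots,g(\xi_n)\}$ exactly when $b\subset g(\xi_1)\cap\cdots\cap g(\xi_n)$, and the conclusion $f(n)\leq b$ unwinds to $b\subset f(n)$. Since a finite intersection of neighborhoods of $a$ is again a neighborhood of $a$, the intersection $g(\xi_1)\cap\cdots\cap g(\xi_n)$ is itself an admissible value of $b$, so the OK condition for the point $a$ collapses to the clean assertion that
\[
g(\xi_1)\cap\cdots\cap g(\xi_n)\subset f(n)
\]
for every $n<\om$ and every increasing tuple $\xi_1<\cdots<\xi_n<\oml$.

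Next I would extract the data witnessing the failure of the weak P-point property: a countable $C\subset X\setminus\{a\}$ with $a\in\closure{C}$ (this is exactly the failure of $X\setminus C\in\mc{N}_X(a)$). Because $X$ is Hausdorff, finite sets are closed, so $C$ cannot be finite; enumerate it as $C=\{x_n\suchthat 1\leq n<\om\}$. The salient consequence is that every neighborhood of $a$ meets $C$. I then choose the adversarial sequence $f\colon\om\to\mc{N}_X(a)$ by $f(0)=X$ and $f(n)=X\setminus\{x_n\}$ for $n\geq 1$; each is a neighborhood of $a$ since singletons are closed and $a\notin C$. Setting $f(0)=X$ (equivalently, indexing $C$ from $1$) is deliberate: it makes $f$ automatically satisfy the degenerate $n=0$ instance of the OK condition, so that any failure of a candidate $g$ must occur at some genuine $n\geq 1$ and thus be tied to $C$ rather than to a bookkeeping artifact.

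It then remains to show that no $g\colon\oml\to\mc{N}_X(a)$ can witness OK-ness for this $f$. Given such a $g$, each $g(\xi)$ meets $C$, so I may choose $h(\xi)\geq 1$ with $x_{h(\xi)}\in g(\xi)$. As $h$ maps the uncountable set $\oml$ into the countable set $\{1,2,\ldots\}$, some value $m\geq 1$ is attained uncountably often, hence at least $m$ times; picking $\xi_1<\cdots<\xi_m$ among those ordinals gives $x_m\in g(\xi_1)\cap\cdots\cap g(\xi_m)$. But the reformulated OK condition at $n=m$ would then force $x_m\in f(m)=X\setminus\{x_m\}$, which is absurd. Hence $f$ has no witness and $a$ is not $\oml$-OK.

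I expect the only real delicacy to lie in the first step—keeping the reverse-containment order straight so that the quantifier over all upper bounds $b$ collapses to the single intersection $g(\xi_1)\cap\cdots\cap g(\xi_n)$, and quietly disposing of the $n=0$ corner case. The mathematical engine is just the pigeonhole principle sending uncountably many neighborhoods $g(\xi)$ into the countably many points of $C$; once the definitional translation is done correctly, there is no substantive obstacle.
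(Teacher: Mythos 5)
Your proof is correct. The paper states this lemma only as a citation to Kunen and gives no proof of its own, so there is nothing internal to compare against; your argument---collapsing the $\oml$-OK condition for $\mc{N}_X(a)$ (ordered by $\supset$) to the single inequality $g(\xi_1)\cap\cdots\cap g(\xi_n)\subset f(n)$, taking $f(n)=X\setminus\{x_n\}$ for an enumeration of a countable set $C$ with $a\in\closure{C}\setminus C$, and then applying the pigeonhole principle to send uncountably many $\xi$ to a single $x_m\in C$---is precisely the standard (Kunen's original) argument, with the $n=0$ corner case and the infinitude of $C$ correctly disposed of.
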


\begin{lem}[Kunen~\cite{kunenok}]
  In the \v Cech-Stone remainder $\om^*$,
  there is a $\cardc$-OK point that is not a P-point.
\end{lem}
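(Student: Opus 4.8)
The plan is to reduce this topological assertion to a purely combinatorial construction of a single free ultrafilter, and then to build that ultrafilter by a transfinite recursion of length $\cardc$ that preserves an independent family. First I would translate the hypotheses. A point of $\om^*=\betaoo$ is a free ultrafilter $p$ on $\om$, and its neighborhood filter $\mc{N}_{\om^*}(p)$ is cofinally isomorphic to $(p,\supseteq^*)$ — the members of $p$ ordered by reverse almost-inclusion — via $A\mapsto A^*=\{q\in\om^*\suchthat A\in q\}$; this isomorphism is all that $\cardc$-OK and $\oml$-directedness see. Under this dictionary the greatest upper bound of finitely many members $B_{\xi_1},\dots,B_{\xi_n}\in p$ is their intersection, so (taking $A_0=\om$ as the bottom of the filter, which kills the empty-tuple case) unwinding the definition shows that $p$ is $\cardc$-OK iff for every decreasing sequence $\la A_n\suchthat n<\om\ra$ in $p$ there is a family $\la B_\xi\suchthat\xi<\cardc\ra\subseteq p$ with $\bigcap_{i<n}B_{\xi_i}\subseteq^* A_n$ for every $n\ge 1$ and every increasing tuple $\xi_0<\dots<\xi_{n-1}<\cardc$; and $p$ fails to be a P-point iff some decreasing sequence in $p$ has no pseudo-intersection in $p$.

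For the construction I would fix once and for all a partition $\om=\bigsqcup_{n<\om}E_n$ into infinite pieces and place the tails $C_n=\om\setminus(E_0\cup\dots\cup E_{n-1})$ into the ground filter; any pseudo-intersection of $\la C_n\ra$ meets each $E_k$ finitely, so I would arrange during the recursion to exclude every such \emph{partial selector} from $p$, which stays consistent because deleting finitely many partial selectors from a tail $C_n$ leaves each block $E_m$ ($m\ge n$) infinite. I would then recurse over $\af<\cardc$, maintaining a filter $F_\af$ generated by fewer than $\cardc$ sets together with a family $\mc{I}_\af$ of size $\cardc$ that is independent over $F_\af$, meaning every finite Boolean combination of members of $\mc{I}_\af$ meets every member of $F_\af$ in an infinite set. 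Standard bookkeeping assigns each stage one task: (i) decide the $\af$-th subset of $\om$, putting it or its complement into $F_\af$ (forcing the complement in whenever the set is a partial selector) while keeping a size-$\cardc$ subfamily of $\mc{I}_\af$ independent; or (ii) given that the $\af$-th decreasing sequence $\la A_n\ra$ already lies in $F_\af$, adjoin an OK-witness. Since $\cf(\cardc)>\alo$, every countable subset of the final $p=\bigcup_{\af<\cardc}F_\af$ lies in some $F_\af$, so every decreasing sequence of $p$ is eventually processed.

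The heart of the matter is the witness lemma for task (ii): given $F$ generated by fewer than $\cardc$ sets, an independent family of size $\cardc$ over $F$, and a decreasing $\la A_n\ra\subseteq F$ with $A_0=\om$, produce $\la B_\xi\suchthat\xi<\cardc\ra$ with $\bigcap_{i<n}B_{\xi_i}\subseteq^* A_n$ while keeping $F\cup\{B_\xi\}$ proper and leaving a size-$\cardc$ independent family for the next stage. I would work blockwise on $D_m=A_m\setminus A_{m+1}$, using $A_n=\bigcup_{m\ge n}D_m$: it suffices that on each block the traces $B_\xi\cap D_m$ form a family in which any $m$ members meet infinitely but any $m+1$ members meet finitely, for then an $n$-fold intersection is finite on every block $D_m$ with $m<n$ and infinite on every block with $m\ge n$, yielding exactly $\bigcap_{i<n}B_{\xi_i}\subseteq^* A_n$. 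Such \emph{$m$-linked, $(m{+}1)$-almost disjoint} families of size $\cardc$ exist concretely: fixing distinct reals $\la r_\xi\suchthat\xi<\cardc\ra$ and letting the countable block be the set of pairs $(k,\sigma)$ with $k<\om$ and $\sigma$ a set of at most $m$ binary strings of length $k$, the sets $B_\xi^{(m)}=\{(k,\sigma)\suchthat r_\xi\restriction k\in\sigma\}$ have the stated intersection pattern, since for large $k$ the reals $r_{\xi_0},\dots,r_{\xi_{n-1}}$ give $n$ distinct strings, which fit inside some $\sigma$ of size at most $m$ precisely when $n\le m$.

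The main obstacle, and the delicate point of the lemma, is the simultaneous demand of the \emph{upper} bounds $\bigcap_{i<n}B_{\xi_i}\subseteq^* A_n$ and the \emph{lower} bounds needed to continue the recursion: the new sets and all their finite intersections must be positive over $F$, and a fresh independent family of size $\cardc$ must survive. I would meet this by realizing the blockwise families not abstractly but inside disjoint slices of the preserved independent family $\mc{I}_\af$ — partitioning its index set $\cardc$ into $\cardc$-many pieces of size $\cardc$ and spending one piece on the present task — so that every finite intersection of finitely many $B_\xi$ with a member of $F$ stays infinite by independence, while an untouched piece furnishes the independent family for stage $\af+1$. Once the recursion is complete, $p=\bigcup_{\af<\cardc}F_\af$ is a free ultrafilter that is $\cardc$-OK, because each of its decreasing sequences received a witness, and is not a P-point, because the fixed sequence $\la C_n\ra$ has no pseudo-intersection in $p$, all partial selectors having been excluded; this proves the lemma.
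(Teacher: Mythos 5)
The paper does not actually prove this lemma: it is quoted verbatim from Kunen~\cite{kunenok}, and the surrounding text explicitly labels it the ``hard result'' in Kunen's argument. So there is no internal proof to compare against, and I am judging your sketch against Kunen's known proof. Much of your setup is right: the translation of $\cardc$\nbd-OK and of non-P-pointness to $(p,\supseteq^*)$, the length-$\cardc$ recursion carrying a small filter plus a size-$\cardc$ independent family, the exclusion of partial selectors of a fixed partition $\om=\bigsqcup_n E_n$ to destroy P-pointness, and the local gadget (for each $m$, a size-$\cardc$ family on a countable set in which any $m$ members have infinite intersection while any $m+1$ have finite intersection, built from branches of $2^{<\om}$) are all correct, and the blockwise computation giving $\bigcap_{i<n}B_{\xi_i}\subseteq^*A_n$ checks out.

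The genuine gap is exactly at the step you yourself call the delicate point, and your one-sentence resolution does not work as stated. You claim the $B_\xi$ can be ``realized inside disjoint slices of the preserved independent family $\mc{I}_\af$ \dots\ so that every finite intersection of finitely many $B_\xi$ with a member of $F$ stays infinite by independence.'' Plain independence cannot deliver this. If the $B_\xi$ restricted to the blocks $D_m$ were finite Boolean combinations of members of $\mc{I}_\af$, then independence over $F$ would give positivity, but it would equally force every $(m+1)$\nbd-fold intersection to be infinite mod $F$, contradicting the $(m+1)$\nbd-almost-disjointness you need on $D_m$. If instead $B_\xi\cap D_m$ is defined through an arbitrary bijection of $D_m$ with your $(k,\sigma)$\nbd-set, it is not a Boolean combination of $\mc{I}_\af$ at all, and independence of $\mc{I}_\af$ over $F$ says nothing about $B_\xi\cap E$ for $E\in F$: since $D_m=A_m\setminus A_{m+1}$ is a difference of filter sets rather than a filter set, a set $E\in F$ may meet each $D_m$ in a single point avoiding every $B_\xi$, so $\bigcap_{i<n}B_{\xi_i}\cap E$ can be finite even though every $E\cap A_n$ is infinite. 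The same unsupported claim undermines the survival of a size-$\cardc$ independent family over the enlarged filter. This is precisely the content of Kunen's hard lemma: his recursion does not carry a plain independent family but a stronger structured object (an independent \emph{linked} family, in which the $m$\nbd-linked, $(m{+}1)$\nbd-disjoint pattern and the independence over the filter are welded together from the outset), and the inductive invariant is that the unused portion of that structure stays independent over $F_\af$ after the $B_\xi$ are adjoined. Until you formulate and prove that preservation lemma, the recursion does not close. A smaller gap of the same kind: forcing the complement of every partial selector into the filter must also be shown compatible with the independent family, which requires strengthening the independence invariant relative to the partition $\bigsqcup_n E_n$; your remark about deleting finitely many selectors from a tail does not address it.
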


\begin{defn}
  We say a space $X$ is an {\it F-space}
  if every two disjoint open $F_\sigma$-sets
  have disjoint closures.
\end{defn}

$\om^*$ is the quintessential example of a compact F-space.
Indeed, a Stone space of Boolean algebra is an F-space
iff the algebra has the {\it countable separable property}:
every two countably generated ideals $I,J$ with
$I\cap J=\{0\}$ extend to principal ideals $I',J'$ with
$I'\cap J'=\{0\}$. Now $\om^*$ is homeomorphic to the
Stone space of $\powset(\om)/[\om]^{<\alo}$.
It is an easy exercise to show that this algebra
has the countable separable property.

\begin{thm}
  In the \v Cech-Stone remainder $\om^*$,
  there exist $a,b$ such that
  $a\eqtuk b$ but $a\not\pin b$.
\end{thm}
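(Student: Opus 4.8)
The plan is to exploit the theorem that pin equivalence preserves the property of being (or not being) a weak P\nbd-point, together with Kunen's two lemmas. I will produce two points $a,b\in\om^*$ that are Tukey equivalent yet differ on weak P\nbd-pointhood; that disparity rules out $a\pin b$. Indeed pin equivalence is symmetric (replace $g$ by its inverse in Definition~\ref{pindef}), so if $a\pin b$ held, the preceding theorem applied to both orderings would force $a$ and $b$ to agree on being weak P\nbd-points.

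For $a$ I take Kunen's $\cardc$\nbd-OK point that is not a P\nbd-point. Since $\oml\le\cardc$, $a$ is $\oml$\nbd-OK, hence a weak P\nbd-point by Lemma~\ref{omlOKweakP}. Since $a$ is not a P\nbd-point, $\mc N(a)$ is not $\oml$\nbd-directed, so Lemma~\ref{tukeytop} yields $[\cardc]^{<\alo}\leqtuk\mc N(a)$; as the character of any point is at most the weight $\cardc$ of $\om^*$, the elementary bound $\cf(P)\le\ka\Rightarrow P\leqtuk[\ka]^{<\alo}$ gives the reverse, so $\mc N(a)\eqtuk[\cardc]^{<\alo}$. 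For $b$ I partition $\om=\bigsqcup_n A_n$ into infinite pieces, place in each clopen copy $A_n^*\cong\om^*$ a copy $a_n$ of Kunen's point, and let $b$ be a limit of the relatively discrete set $\{a_n:n<\om\}$ along a free ultrafilter $\mc W$ on the index set. Because $b$ lies in the closure of the countable set $\{a_n\}$ and $b\ne a_n$ (as $b$ concentrates on no single $A_n$, it lies in no $A_n^*$), $b$ is not a weak P\nbd-point. Again $\mc N(b)\leqtuk[\cardc]^{<\alo}$ for character reasons, so the crux is the reverse inequality $[\cardc]^{<\alo}\leqtuk\mc N(b)$.

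This reverse inequality is where I expect the real work, and it is the main obstacle. The difficulty is that $b$ cannot itself be $\cardc$\nbd-OK, since that would make $b$ a weak P\nbd-point; Lemma~\ref{tukeytop} is therefore unavailable for $b$, and I must instead transfer top\nbd-ness from the fibers by hand. Concretely, for each $a_n$ choose a clopen witnessing family $\{B^n_\xi{}^*:\xi<\cardc\}$ with $B^n_\xi\subseteq A_n$, $a_n\notin B^n_\xi{}^*$, and every infinite subfamily accumulating at $a_n$; clopen witnesses exist because $\om^*$ is Boolean and shrinking a neighborhood only helps unboundedness. Set $C_\xi=\bigcup_n B^n_\xi$ and $T_\xi=C_\xi^*$.

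To finish, describe $b$ as the $\mc W$\nbd-limit of the $a_n$, that is, $D\in b$ iff $\{n:D\in a_n\}\in\mc W$. Since $a_n\notin B^n_\xi{}^*$ means $B^n_\xi\notin a_n$, and the $A_n$ are disjoint, $C_\xi$ belongs to no $a_n$, whence $C_\xi\notin b$ and so $b\notin T_\xi$. On the other hand, for every infinite $I\subseteq\cardc$ and every $D\in b$ there is $n$ with $D\in a_n$, and then the witnessing property at $a_n$ supplies $\xi\in I$ with $D\cap B^n_\xi$ infinite, hence $D\cap C_\xi$ infinite; this says exactly that $b\in\closure{\bigcup_{\xi\in I}T_\xi}$. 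Thus $\{T_\xi:\xi<\cardc\}$ is a family every infinite subfamily of which is unbounded in $\mc N(b)$, witnessing $[\cardc]^{<\alo}\leqtuk\mc N(b)$. Combining the two bounds, $\mc N(b)\eqtuk[\cardc]^{<\alo}\eqtuk\mc N(a)$, so $a\eqtuk b$ while $a$ is a weak P\nbd-point and $b$ is not; therefore $a\not\pin b$, completing the proof.
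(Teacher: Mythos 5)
Your proof is correct, and the skeleton matches the paper's: take $a$ to be Kunen's $\cardc$\nbd-OK non-P\nbd-point (a weak P\nbd-point with $a\eqtuk[\cardc]^{<\alo}$ via Lemmas~\ref{omlOKweakP} and~\ref{tukeytop}), build $b$ as an ultrafilter limit of a countable relatively discrete set so that $b$ fails to be a weak P\nbd-point, and conclude $a\not\pin b$ from the preceding theorem plus symmetry of $\pin$. Where you genuinely diverge is the crux, the Tukey equivalence. The paper sets $b=\lim_{n\to a}c_n$ for an \emph{arbitrary} discrete sequence and proves $a\leqtuk b$ directly, using the F\nbd-space property of $\om^*$ to separate $\closure{\bigcup_{n\in U}O_n}$ from $\closure{\bigcup_{n\notin U}O_n}$ and thereby make $V\mapsto\{n\suchthat c_n\in\closure{V}\}$ convergent; it then sandwiches both points between $[\cardc]^{<\alo}$. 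You instead force the fibers $a_n$ to be copies of Kunen's point and transport a $\cardc$\nbd-sized family, all of whose infinite subfamilies are unbounded, through the ultrafilter sum, obtaining $[\cardc]^{<\alo}\leqtuk\mc{N}(b)$ by hand and never touching the F\nbd-space property. Each route has its virtue: the paper's is shorter given the F\nbd-space machinery already present and, as its closing remark notes, proves the stronger fact that $(a,\supset)\leqtuk(b,\supset^*)$ whenever $a$ lies strictly below $b$ in the Rudin--Frol\'ik order; yours is more elementary and self-contained, at the price of needing the fibers to be Tukey-top (for a general discrete sequence your lower bound on $b$ would be unavailable). Two cosmetic points: the $T_\xi$ themselves are not neighborhoods of $b$ --- it is the complements $(\om\setminus C_\xi)^*$ that form the subset of $\mc{N}_X(b)$ whose infinite subfamilies are unbounded, which is exactly what your verification $b\in\closure{\bigcup_{\xi\in I}T_\xi}$ delivers; and you should say a word (as the paper's Lemma~\ref{tukeytop} proof does implicitly) that such a family yields the convergent map $N\mapsto\{\xi\suchthat(\om\setminus C_\xi)^*\supseteq N\}$ witnessing $[\cardc]^{<\alo}\leqtuk\mc{N}(b)$.
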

\begin{proof}
  Let $X=\om^*$.
  We identify each point $e\in X$ with the ultrafilter
  \[\{U\subset\om\suchthat e\in\closure{U}\}\]
  where the closure $\closure{U}$ is computed
  in the \v Cech-Stone compactification $\beta\om=\om\cup X$.
  The map $E\mapsto E\setminus\om$ surjects
  from the above ultrafilter to
  the set of the clopen neighborhoods of $e$.
  And for $U,V\subset\om$, we have
  $\closure{U}\setminus\om\subset\closure{V}\setminus\om$
  iff $U\subset^*V$
  where $\subset^*$ is inclusion modulo finite sets.
  Therefore, $e\eqtuk\mc{N}_X(e)$
  provided $e$ is ordered by $\supset^*$.

  Let $a\in X$ be $\cardc$-OK but not a P-point.
  Then $[\cardc]^{<\alo}\leqtuk a$ by
  Lemma~\ref{tukeytop}
  and $a$ is a weak P-point by Lemma~\ref{omlOKweakP}.
  Let $(c_n)_{n<\om}$ be a discrete sequence in $X$
  and let $b$ be the ultralimit $\lim_{n\to a}c_n$.
  Then $a\not\pin b$ because $b$ is not a weak P-point.

  \begin{claim}
    $a\leqtuk b$.
  \end{claim}
  \begin{proof}
  We will show that
  $\varphi(V)=\{n<\om\suchthat c_n\in\closure{V}\}$
  defines a convergent map from $(b,\supset^*)$ to
  $(a,\supset)$, noting that the identity map
  from $(a,\supset)$ to $(a,\supset^*)$ is convergent.
  Since $(c_n)_{n<\om}$ is discrete, there are
  disjoint open $F_\sigma$ sets $(O_n)_{n<\om}$ such that
  $c_n\in O_n$ for each $n$. Suppose $U\in a$.
  Since $X$ is an F-space, $\bigcup_{n\in U}O_n$
  and $\bigcup_{n\not\in U}O_n$ have disjoint closures.
  Since also $b\in\closure{\bigcup_{n\in U}O_n}$,
  we may choose $V_0\in b$ such that
  $\closure{V_0}$ is disjoint from $\{c_n\suchthat n\not\in U\}$.
  Therefore, $\varphi(V)\subset U$ for all $V\subset^* V_0$.
  \end{proof}
  
  Moreover, $b\leqtuk [\cardc]^{<\alo}$
  since $b$ has cardinality $\cardc$.
  Therefore, \[a\eqtuk b\eqtuk[\cardc]^{<\alo}.\qedhere\]
\end{proof}

\begin{rmrk}
  In the above proof, the justification of $a\leqtuk b$
  works for any $a\in\om^*$. It really shows that if
  $a$ is strictly below $b$ in the Rudin-Frol\'ik order,
  then $(a,\supset)$ is Tukey below $(b,\supset^*)$.
\end{rmrk}

\section{Pin inhomogeneity in other F-spaces}
Besides $\om^*$, another simply defined example of
a compact F-space is the absolute of $2^\om$,
that is, the Stone space $\Xi$ of the algebra
of regular open subsets of $2^\om$. This is an F-space
because any regular open algebra is complete.
On the other hand, $\Xi$ has a countable $\pi$-base
because $2^\om$ does. Therefore, $\Xi$ lacks weak P-points.
Hence, our construction of pin-inequivalent points in $\om^*$,
which relied on Kunen's construction of a weak P-point in $\om^*$,
cannot generalize to all infinite F-spaces.
Nevertheless, we can use a lemma from another paper of Kunen's
to show that every infinite F-space has pin-inequivalent points.

\begin{defn}Given ultrafilters $\mc{U},\mc{V}$ on $\om$,
  we say $\mc{U}$ is {\it Rudin-Keisler below} $\mc{V}$
  and write $\mc{U}\leqrk\mc{V}$ if there exists
  $f\colon\om\to\om$ such that $\beta f(\mc{V})=\mc{U}$,
  that is, such that $E\in\mc{U}$ iff $\inv{f}E\in\mc{V}$,
  for all $E\subset\om$.
\end{defn}

\begin{thm}[Kunen~\cite{kunenok}]%weak P pts paper
  There are Rudin-Keisler incomparable weak P-points in $\om^*$.
\end{thm}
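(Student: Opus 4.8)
The plan is to build the two points by a simultaneous transfinite recursion rather than by any counting shortcut. I would first check whether a cardinality argument suffices: each ultrafilter $\mc{V}$ has at most $\cardc$ Rudin--Keisler predecessors, since every predecessor has the form $\beta f(\mc{V})$ for some $f\colon\om\to\om$ and there are only $\cardc$ such $f$; hence any set of weak P-points that is RK-linearly ordered has size at most $\cardcp$. This would finish the proof if there were provably more than $\cardcp$ weak P-points, but the number of weak P-points may equal $\cardcp$ (when $2^{\cardc}=\cardcp$), so the shortcut fails in ZFC and an explicit construction is needed.

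So the real plan is to construct ultrafilters $\mc{U},\mc{V}$ directly, arranging two things at once: (i) each is $\oml$-OK, hence a weak P-point by Lemma~\ref{omlOKweakP}; and (ii) $\beta f(\mc{V})\neq\mc{U}$ and $\beta f(\mc{U})\neq\mc{V}$ for every $f\colon\om\to\om$. The engine is a large independent family of subsets of $\om$, split in advance into pairwise disjoint blocks whose union remains independent; independence guarantees that every finite Boolean combination drawn from distinct members is infinite, so the recursion can always extend the two filters-so-far without collapsing them. One block is reserved to feed the $\oml$-OK requirement, and the remaining $\cardc$-many blocks are indexed by the functions $f\colon\om\to\om$ to service the RK-diagonalization.

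At the OK stages I would, for each countable sequence that the definition of $\oml$-OK must handle, produce the required $\oml$-indexed witnessing family out of the reserved block, maintaining $\oml$-OK-ness for both $\mc{U}$ and $\mc{V}$ throughout; this is the part that forces the recursion to have length at least $\cardc$ and carries most of the technical weight. At the stage assigned to $f$, I would defeat $\beta f(\mc{V})=\mc{U}$ as follows: since $\beta f(\mc{V})\leqrk\mc{V}$ always holds, it suffices to separate the two ultrafilters by committing a fresh independent set $A$ into $\mc{U}$ while forcing $\om\setminus\inv{f}A$ into $\mc{V}$, which is possible because $A$ has been kept independent of all prior commitments; then $A\in\mc{U}$ but $A\notin\beta f(\mc{V})$. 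If instead $f$ is constant on a set in $\mc{V}$, then $\beta f(\mc{V})$ is principal and there is nothing to do. The symmetric move defeats $\beta f(\mc{U})=\mc{V}$, and since all $\cardc$ functions are enumerated, RK-incomparability is secured.

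The main obstacle is reconciling the two demands on the same independent family: $\oml$-OK-ness is a saturation property asking each filter to absorb uncountable witnessing families, whereas RK-diagonalization asks it to commit specific sets in or out, and both consume independence. I would resolve this through the disjoint-block partition together with the standard fact that independence is preserved when the commitments made at different stages are drawn from different blocks, so the OK-construction and the diagonalization never interfere. A secondary point needing care is that when $\beta f(\mc{V})$ is a nonprincipal ultrafilter one must genuinely exhibit a separating set rather than merely keep some single set out, which the freshness of $A$ supplies.
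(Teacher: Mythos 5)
This statement is one the paper does not prove: it is imported verbatim from Kunen~\cite{kunenok} and used as a black box, so there is no in-paper argument to compare against. Judged on its own terms, your sketch does follow the broad outline of Kunen's actual construction (a transfinite recursion of length $\cardc$ over a reservoir of independent sets, the $\oml$-OK property to certify weak P-point-ness via Lemma~\ref{omlOKweakP}, and diagonalization against all $f\colon\om\to\om$ for Rudin--Keisler incomparability), and your observation that the counting shortcut fails when $2^{\cardc}=\cardcp$ is correct and well taken. But the two steps you wave through are precisely where the theorem lives.

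First, a plain independent family cannot ``feed the $\oml$-OK requirement.'' The OK property demands, for each $f\colon\om\to\mc{U}$, a family $\{g(\xi)\suchthat\xi<\oml\}\subset\mc{U}$ such that the intersection of any $n$ of the $g(\xi)$ is (mod the filter) contained in $f(n)$; independence, by contrast, forbids any finite intersection from being covered by finitely many complements, so the reserved block's members have no such containments and cannot serve as the witnesses. Kunen had to invent a different combinatorial object --- independent \emph{linked} families (of size $\cardc\times\cardc$) and prove they exist on $\om$ --- exactly to supply these witnesses while still leaving enough freedom to complete the recursion; that existence proof is the technical core and is entirely absent from your plan. Second, in the RK step, independence of the fresh set $A$ tells you about $A$ and $\om\setminus A$, not about $\inv{f}A$: for the given $f$ the set $\inv{f}A$ may already contain a set committed to $\mc{V}$, in which case $\om\setminus\inv{f}A$ cannot be added to $\mc{V}$ at all. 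The genuine argument needs a case split (e.g., handling the possibility that $f$ is constant or finite-to-one on a set of the filter) together with a proof that among the stage's many reserved candidates some choice of $A$ keeps both sides proper and preserves the remaining independence. As written, the proposal names the right scaffolding but omits the two lemmas that make it stand.
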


\begin{lem}[Kunen~\cite{kunenlhc}]%lemma 4 of LHC
  \label{rkweakp}
  Suppose $\mc{U},\mc{V}$ are Rudin-Keisler incomparable
  weak P-points in $\om^*$. Also suppose that,
  in a compact F-space $X$, $a$ is the $\mc{U}$-limit
  of a discrete $\om$-sequence. Then $a$ is not the
  $\mc{V}$-limit of any $\om$-sequence in $X\setminus\{a\}$.
\end{lem}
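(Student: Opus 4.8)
The plan is to assume, toward a contradiction, that $a=\lim_{n\to\mc V}d_n$ with $d_n\neq a$ for every $n$, while $a=\lim_{m\to\mc U}c_m$ for a discrete sequence $(c_m)$. First I would use discreteness to fix pairwise disjoint open $F_\sigma$ sets $O_m\ni c_m$ and set $O=\bigcup_m O_m$. For each $A\subset\om$ the sets $\bigcup_{m\in A}O_m$ and $\bigcup_{m\notin A}O_m$ are disjoint open $F_\sigma$ sets, so by the F-space hypothesis their closures are disjoint; hence for each $x\in\closure O$ the family $\beta e(x)=\{A\subset\om\suchthat x\in\closure{\bigcup_{m\in A}O_m}\}$ is an ultrafilter, and $x\mapsto\beta e(x)$ is a continuous map $\closure O\to\beta\om$. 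Since $c_m\in O_m$ gives $\beta e(c_m)=m$ and $a=\lim_{\mc U}c_m$, continuity yields $\beta e(a)=\mc U$; moreover $\beta e(x)$ is principal iff $x$ lies in some $\closure{O_m}$.

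Next I would split on where the $d_n$ lie. If $\{n\suchthat d_n\notin\closure O\}\in\mc V$, then, shrinking inside the open set $X\setminus\closure O$, I would choose open $F_\sigma$ sets $Q_n\ni d_n$ with $\closure{Q_n}\subset X\setminus\closure O$; their union is an open $F_\sigma$ set disjoint from $O$, so the F-space property forces $\closure{\bigcup_n Q_n}$ to miss $\closure O\ni a$. This contradicts $a=\lim_{\mc V}d_n$, so I may assume $d_n\in\closure O$ for $\mc V$-almost every $n$. Applying the continuous map $\beta e$ to this convergence gives $\mc U=\lim_{n\to\mc V}\mc W_n$ in $\beta\om$, where $\mc W_n=\beta e(d_n)$.

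Two of the resulting cases are then easy. If $\mc V$-almost every $\mc W_n$ is principal, say $d_n\in\closure{O_{g(n)}}$, then $\mc U=\beta g(\mc V)$, so $\mc U\leqrk\mc V$, contradicting Rudin--Keisler incomparability. If instead $\mc V$-almost every $\mc W_n$ is nonprincipal and different from $\mc U$, then $\mc U$ lies in the closure of the countable set $\{\mc W_n\suchthat n<\om\}\subset\om^*\setminus\{\mc U\}$, contradicting that $\mc U$ is a weak P-point.

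The one remaining case---$\beta e(d_n)=\mc U$ for $\mc V$-almost every $n$, so that the $d_n$ approach $a$ from within its own fiber $(\beta e)^{-1}\{\mc U\}$---is the one I expect to be the main obstacle, since no soft push-forward under $\beta e$ detects it. This is precisely where the weak P-point property of $\mc V$, unused above, must be brought to bear: the problem reduces to showing that $a$ is not the $\mc V$-limit of any nontrivial sequence confined to its fiber, which calls for Kunen's finer analysis of that fiber (or a second tube construction carried out inside it) rather than the elementary F-space separations used so far. Morally, both hypotheses on $\mc V$---being a weak P-point and being Rudin--Keisler incomparable with $\mc U$---are needed exactly to close this fiber case.
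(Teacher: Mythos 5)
Note first that the paper does not prove this lemma at all: it is imported verbatim from Kunen's \emph{Large homogeneous compact spaces}, so there is no in-paper argument to compare yours against and your proposal must stand on its own. Most of it does. The map $\beta e\colon\closure{O}\to\beta\om$ is well defined and continuous for exactly the reasons you give; the case in which $\mc V$-many $d_n$ fall outside $\closure{O}$ is correctly eliminated by one more F-space separation; and the two cases you do close are handled correctly ($\mc V$-many fibers principal contradicts Rudin--Keisler incomparability, and $\mc V$-many $\mc W_n$ nonprincipal and distinct from $\mc U$ contradicts the weak P-point property of $\mc U$).

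The genuine gap is the case you explicitly decline to prove, $\beta e(d_n)=\mc U$ for $\mc V$-almost all $n$: as written the proposal is an incomplete proof, and your diagnosis of what is missing is also off, since no property of $\mc V$ beyond being a free ultrafilter is needed there and no ``finer analysis of the fiber'' is required --- the same elementary separations you already used suffice. In that case each $d_n$ misses every $\closure{O_m}$, and each $d_n$ also misses $\closure{C}$ where $C=\{c_m\suchthat m<\om\}$: a point of $\closure{C}$ whose $\beta e$-value is $\mc U$ lies in $\closure{\{c_m\suchthat m\in A\}}$ for every $A\in\mc U$, and taking $A=\{m\suchthat c_m\in N\}$ for cozero neighborhoods $N\ni a$ and $M\ni d_n$ with $N\cap M=\vn$, one more F-space separation (applied to $N\cap O_A$ and $M$) shows that the only such point is $a$ itself, whereas $d_n\neq a$. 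Now choose cozero $G_m\ni c_m$ with $\closure{G_m}\subset O_m$ and cozero $W_n\ni d_n$ with $\closure{W_n}\cap\closure{C}=\vn$, and perform the standard diagonal shrinking ($G_m$ shrunk to avoid the first $m$ of the $\closure{W_n}$, $W_n$ shrunk to avoid the first $n$ of the $\closure{G_m}$; each step is legal because $c_m\notin\closure{W_n}$ and $d_n\notin\closure{G_m}$ for all $m,n$). This yields disjoint cozero sets $G\supset C$ and $W\supset\{d_n\suchthat n\in B\}$ for some $B\in\mc V$, so the F-space property gives $\closure{G}\cap\closure{W}=\vn$; yet $a=\lim_{m\to\mc U}c_m\in\closure{G}$ and $a=\lim_{n\to\mc V}d_n\in\closure{W}$, a contradiction. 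In particular the lemma really only needs that $\mc U$ is a weak P-point and $\mc U\not\leqrk\mc V$; the symmetric hypotheses are there so the paper can apply it with the roles of $\mc U$ and $\mc V$ exchanged.
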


\begin{thm}
  Let $X$ be an infinite compact F-space.
  Then there exist $a,b\in X$ such that $a\not\pin b$.
\end{thm}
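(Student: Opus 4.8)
The plan is to mimic the construction of pin-inequivalent points in $\om^*$, but now to drive the contradiction through Kunen's Lemma~\ref{rkweakp} rather than through weak P-points directly (which need not exist in a general infinite compact F-space, for instance the absolute of $2^\om$). First I would invoke Kunen~\cite{kunenok} to fix Rudin--Keisler incomparable weak P-points $\mc U,\mc V$ in $\om^*$. Since $X$ is an infinite compact Hausdorff space, it contains a countable relatively discrete subset, so fix a discrete $\om$-sequence $(c_n)_{n<\om}$ of distinct points of $X$. Using compactness to take ultralimits, set $a=\lim_{n\to\mc U}c_n$ and $b=\lim_{n\to\mc V}c_n$. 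A short argument shows $a,b\notin\{c_n\suchthat n<\om\}$: each $c_n$ is isolated in the sequence, so the set of indices landing in a separating neighborhood of $c_n$ is a singleton, hence not in a free ultrafilter. In particular $c_n\neq a$ for every $n$, a fact I will need below.

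Next I would assume toward a contradiction that $a\pin b$ and let $R$ represent $a\pin b$ via Theorem~\ref{represent}; thus $R$ is symmetric, has domain $X$, and satisfies $aRx\biimp x=b$ and $bRx\biimp x=a$. Since $\dom(R)=X$, for each $n$ I may choose $y_n$ with $c_nRy_n$. Because $a=\lim_{n\to\mc U}c_n$, Corollary~\ref{Ruf} gives $b=\lim_{n\to\mc U}y_n$. The key point is that $(y_n)$ lands in $X\setminus\{b\}$: if $y_n=b$ then $bRc_n$ by symmetry, forcing $c_n=a$, contrary to $c_n\neq a$. So $(y_n)_{n<\om}$ is an $\om$-sequence in $X\setminus\{b\}$ whose $\mc U$-limit is $b$. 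On the other hand $b=\lim_{n\to\mc V}c_n$ is the $\mc V$-limit of the discrete sequence $(c_n)$, and Rudin--Keisler incomparability is symmetric, so Lemma~\ref{rkweakp} applied with the roles of $\mc U,\mc V$ exchanged says $b$ cannot be the $\mc U$-limit of any $\om$-sequence in $X\setminus\{b\}$. This contradiction shows $a\not\pin b$, and since $a\pin a$ trivially, this already forces $a\neq b$, so $a,b$ are the desired points.

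The genuinely delicate step is the observation in the second paragraph that the ``image'' sequence $(y_n)$ avoids the target point $b$; everything depends on it, since otherwise $(y_n)$ could fail to lie in $X\setminus\{b\}$ and Lemma~\ref{rkweakp} would not apply. This is exactly where I use both the symmetry of the representing relation and the fact that no $c_n$ equals $a$. The remaining ingredients are routine: the existence of a discrete $\om$-sequence in any infinite compact Hausdorff space is standard, and orienting Lemma~\ref{rkweakp} correctly only requires noting that ``Rudin--Keisler incomparable'' is a symmetric condition on the pair $\{\mc U,\mc V\}$. I do not expect to need the earlier theorem that pin equivalence preserves non-weak-P-points, nor the $\cardc$-OK machinery used for $\om^*$; Lemma~\ref{rkweakp} packages precisely the F-space input required here.
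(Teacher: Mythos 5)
Your proof is correct and follows essentially the same route as the paper's: fix Rudin--Keisler incomparable weak P-points $\mc U,\mc V$, take $a,b$ to be the $\mc U$- and $\mc V$-limits of a discrete sequence, push the sequence through a representing relation $R$ via Corollary~\ref{Ruf}, and contradict Lemma~\ref{rkweakp}. The only difference is that you spell out the small verifications (that the sequence $(y_n)$ avoids $b$, using symmetry of $R$ and $c_n\neq a$) that the paper leaves implicit.
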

\begin{proof}
  Let $\mc{U},\mc{V}\in\om^*$ be Rudin-Keisler incomparable
  weak P-points.
  Let $(a_n)_{n<\om}$ be a discrete sequence in $X$,
  let $a=\lim_{n\to\mc{U}}a_n$, and let $b=\lim_{n\to\mc{V}}a_n$.
  By Lemma~\ref{rkweakp}, $b$ is not the $\mc{U}$-limit
  of any $\om$-sequence in $X\setminus\{b\}$.
  Seeking a contradiction, suppose that $R$ represents $a\pin b$.
  For each $n<\om$, choose $b_n$ such that $a_nRb_n$.
  Then $b_n\not=b$ for all $n<\om$. Also,
  by Corollary~\ref{Ruf}, $\lim_{n\to\mc{U}}b_n=b$.
  Thus, we have a contradiction.
\end{proof}

\section{Pin homogeneity}
\begin{defn}
  Given a point $a$ in a space $X$,
  a subset $\mc{S}$ of $\mc{N}_X(a)$ is a
  {\it neighborhood subbase} at $a$ if
  $\mc{N}_X(a)$ is the smallest filter containing $\mc{S}$.
\end{defn}
\begin{defn}
  A set $\mc{E}$ of sets is {\it independent} if,
  for each pair of finite nonempty $\mc{F},\mc{G}\subset\mc{E}$,
  if $\mc{F}\cap\mc{G}=\vn$, then
  $\bigcap\mc{F}\not\subset{\bigcup\mc{G}}$.
\end{defn}

\begin{lem}\label{indeppin}
  In a compact space $X$, if there is a bijection
  from an independent neighborhood subbase at $a$
  to an independent neighborhood subbase at $b$,
  then $a\pin b$.
\end{lem}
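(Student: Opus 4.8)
The plan is to produce, via Lemma~\ref{localpin}, a single closed relation that ``matches'' the two subbases pattern by pattern. Using the given bijection, reindex so that $\{A_i\}_{i\in I}$ is the independent neighborhood subbase at $a$, $\{B_i\}_{i\in I}$ is the independent neighborhood subbase at $b$, and the bijection is $A_i\mapsto B_i$. The case $a=b$ is trivial, so assume $a\neq b$. For finite $F\subseteq I$ write $A_F=\bigcap_{i\in F}A_i$ and $B_F=\bigcap_{i\in F}B_i$. Since finite intersections of subbase elements form neighborhood bases, $a\in\interior{A_F}$ and $b\in\interior{B_F}$ for every $F$, and, by regularity of the compactum $X$, $\bigcap_F\closure{A_F}=\{a\}$ and $\bigcap_F\closure{B_F}=\{b\}$.

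The candidate is
\[
S_0=\bigcap_F\Big[\big(\closure{A_F}\times\closure{B_F}\big)\cup\big((X\setminus\interior{A_F})\times(X\setminus\interior{B_F})\big)\Big],
\]
the intersection ranging over finite $F\subseteq I$. Each bracketed set is a union of two products of closed sets, hence closed, so $S_0$ is closed; this is exactly why I phrase the pattern match with closures on the positive side and interiors on the negative side rather than with a raw biconditional, which would fail to be closed. Because $a\in\interior{A_F}$ for every $F$, the pair $(a,y)$ lies in $S_0$ precisely when $y\in\bigcap_F\closure{B_F}=\{b\}$; symmetrically $(x,b)\in S_0$ precisely when $x=a$. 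Thus $S_0$ already satisfies the two biconditionals demanded by Lemma~\ref{localpin}.

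The main work is showing $S_0$ is total and co-total, and this is where independence enters. Given $x$, the partners $y$ with $(x,y)\in S_0$ are exactly those lying in $\closure{B_F}$ for every $F$ with $x\in\interior{A_F}$ and outside $\interior{B_F}$ for every $F$ with $x\notin\closure{A_F}$. By compactness it suffices to solve each finite such system. I implement the finitely many positive constraints by a single point of $B_P$, where $P\subseteq I$ is the union of the relevant $F$'s (so $B_P\subseteq\closure{B_F}$ for each). For each negative constraint $F$ we have $x\notin\closure{A_F}$, hence $x\notin A_F$, hence $x\notin A_{i_F}$ for some $i_F\in F$; and since $x\in A_i$ for all $i\in P$, each such $i_F$ lies outside $P$. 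Independence of $\{B_i\}$ then gives a point of $\bigcap_{i\in P}B_i$ avoiding every $B_{i_F}$, which meets all the constraints. The co-total direction is identical with $A$ and $B$ exchanged, using independence of $\{A_i\}$. I expect this finite-intersection-property argument, and especially the bookkeeping that keeps the negative indices $i_F$ out of $P$, to be the crux of the proof.

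Finally I would localize. Fix disjoint closed neighborhoods $U\ni a$ and $V\ni b$. A routine closedness argument in the spirit of Lemma~\ref{likects} shows that every $S_0$-partner of a point sufficiently close to $a$ lies in $\interior{V}$, and dually every point sufficiently close to $b$ has all its $S_0$-partners in $\interior{U}$. Combined with totality and co-totality, this makes $\dom(S)$ a neighborhood of $a$ and $\ran(S)$ a neighborhood of $b$, where $S=S_0\cap(U\times V)$; restricting to $U\times V$ keeps $\dom(S)$ and $\ran(S)$ disjoint and preserves the two biconditionals. Lemma~\ref{localpin} then furnishes a relation representing $a\pin b$, so $a\pin b$.
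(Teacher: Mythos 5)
Your argument is correct and is essentially the paper's own proof: a closed relation that matches the two subbases pattern by pattern, independence plus compactness to get totality of the relation, and Lemma~\ref{localpin} to finish. The only differences are bookkeeping --- you make each block closed up front via the $\closure{A_F}$ versus $X\setminus\interior{A_F}$ asymmetry and arrange disjointness of domain and range at the end by localizing, whereas the paper first shrinks the two subbases so that their unions are disjoint and then takes closures of the finite intersections.
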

\begin{proof}
  \Wma\ $a\not=b$.
  Let $f_1\colon\mc{A}_1\to\mc{B}_1$ biject
  from an independent neighborhood subbase at $a$
  to an independent neighborhood subbase at $b$.
  First, we construct modified $f_1,\mc{A}_1,\mc{B}_1$
  for which $\bigcup\mc{A}_1$ and
  $\bigcup\mc{B}_1$ are disjoint.
  Choose finite nonempty $\mc{C}_1\subset\mc{A}_1$
  and $\mc{D}_1\subset\mc{B}_1$
  such that $\bigcap\mc{C}_1$ and
  $\bigcap\mc{D}_1$ are disjoint.
  Let $\mc{C}_2=\mc{C}_1\cup\inv{f_1}\mc{D}_1$
  and $\mc{D}_2=f_1\mc{C}_1\cup\mc{D}_1$.
  Let
  \begin{align*}
    \mc{A}_2&=\{U\cap\bigcap\mc{C}_2\suchthat
    U\in\mc{A}_1\setminus\mc{C}_2\};\\
    \mc{B}_2&=\{U\cap\bigcap\mc{D}_2\suchthat
    U\in\mc{B}_1\setminus\mc{D}_2\}.\\    
  \end{align*}
  Then $A=\bigcup\mc{A}_2$ and
  $B=\bigcup\mc{B}_2$ are disjoint. Moreover,
  because $\mc{A}_1$ and $\mc{B}_1$ are each independent,
  $\mc{A}_2$ and $\mc{B}_2$ are too and
  $f_2(U\cap\bigcap\mc{C}_2)=f_1(U)\cap\bigcap\mc{D}_2$
  defines a bijection from $\mc{A}_2$ to $\mc{B}_2$.
  
  For each $N\in\mc{A}_2$, let \[T_N=(N\times f_2(N))
  \cup((A\setminus N)\times(B\setminus f_2(N))).\]
  Because $\mc{A}_2$ and $\mc{B}_2$ are each independent,
  if $\mc{F}\subset\mc{A}_2$ is finite and nonempty,
  then $S_{\mc{F}}=\bigcap_{N\in\mc{F}}T_N$
  has domain $A$ and range $B$.
  By compactness, $\closure{S_{\mc{F}}}$
  has domain $\closure{A}$ and range $\closure{B}$;
  so does $S=\bigcap_{\mc{F}}\closure{S_{\mc{F}}}$.
  Moreover, if $xT_Nb$ for some $x,N$, then $x\in N$.
  Therefore, $xSb$ implies $x=a$.
  Likewise, $aSy$ implies $y=b$.
  By Lemma~\ref{localpin}, $a\pin b$.
\end{proof}

\begin{thm}\label{cantorpin}
  If $X$ is a compact space, $\ka$ is a cardinal,
  and $\character{x,X}\leq\ka$ for all $x\in X$,
  then $X\times 2^\ka$ is pin homogeneous.
\end{thm}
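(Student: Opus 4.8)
The plan is to apply Lemma~\ref{indeppin}: it suffices to equip any two distinct points $a=(x,s)$ and $b=(y,t)$ of $X\times 2^\ka$ with independent neighborhood subbases of the same cardinality, for then any bijection between them witnesses $a\pin b$. Reflexivity is trivial (take $f=g=\id$), and the finite-$\ka$ case is immediate: a point of finite character in a compact Hausdorff space is isolated, so if $\ka$ is finite then $X$ is finite and discrete, hence homogeneous and a fortiori pin homogeneous. So I assume $\ka$ is infinite.

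The crux is that $2^\ka$ supplies an abundance of independent clopen sets, namely the coordinate sets $[f(\af)=s(\af)]$, whereas the neighborhoods coming from the $X$ factor are in general badly non-independent (they may be nested). The idea is to ``launder'' each $X$-neighborhood through a private coordinate. Concretely, fix a neighborhood subbase $\{U_i:i\in I\}$ of $x$ in $X$ with $\card{I}\le\ka$; one exists because $\character{x,X}\le\ka$. Choose an injection $i\mapsto\af_i$ of $I$ into $\ka$ whose range $\ka_0$ has complement $\ka_1=\ka\setminus\ka_0$ of full size $\ka$ (possible since $\ka$ is infinite). Then, at $a=(x,s)$, define the ``pure'' neighborhoods $P_\af=X\times[f(\af)=s(\af)]$ for $\af\in\ka_1$ and the ``hybrid'' neighborhoods $Q_i=(U_i\times 2^\ka)\cap\bigl(X\times[f(\af_i)=s(\af_i)]\bigr)$ for $i\in I$.

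I would then verify three things. First, each $P_\af$ and $Q_i$ is genuinely a neighborhood of $a$. Second, $\{P_\af:\af\in\ka_1\}\cup\{Q_i:i\in I\}$ is a neighborhood subbase: a basic neighborhood of $a$ is cut out by one $X$-neighborhood and finitely many coordinate constraints; a finite intersection of appropriate $Q_i$'s is contained in $U\times 2^\ka$ (since $\{U_i\}$ is a subbase at $x$), while each coordinate constraint $[f(\gm)=s(\gm)]$ is recovered from $P_\gm$ when $\gm\in\ka_1$ and from $Q_i$ when $\gm=\af_i\in\ka_0$. Third, and this is the whole point, the family is independent: given disjoint finite nonempty subfamilies $\mc{F},\mc{G}$, the private coordinate attached to each member lets me build a separating point $(x,g)$, setting $g=s$ on the coordinates indexed by $\mc F$ and $g=1-s$ on those indexed by $\mc G$; these coordinate sets are pairwise disjoint precisely because $i\mapsto\af_i$ is injective, $\ka_0\cap\ka_1=\vn$, and $\mc F\cap\mc G=\vn$, so $(x,g)\in\bigcap\mc F\setminus\bigcup\mc G$. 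Crucially, the raw $U_i$ never need to be independent, since the coordinate flips alone do all the separating.

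Finally, the subbase at $a$ has cardinality $\card{\ka_1}+\card{I}=\ka$, and by symmetry the analogous subbase at $b$ also has cardinality $\ka$; hence a bijection between them exists and Lemma~\ref{indeppin} yields $a\pin b$. The main obstacle is exactly the independence verification: it is what forces the bookkeeping of one private coordinate per $X$-neighborhood together with a reservoir $\ka_1$ of $\ka$-many free coordinates, and one must confirm that the coordinate assignments prescribed by $\mc F$ and by $\mc G$ can never collide.
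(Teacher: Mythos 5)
Your proof is correct and follows essentially the same route as the paper: attach a private coordinate of $2^\ka$ to each member of a neighborhood subbase coming from $X$ so that coordinate flips alone witness independence, then invoke Lemma~\ref{indeppin}. The only cosmetic difference is that the paper indexes the subbase at $x$ by all of $\ka$ (repetitions allowed) and makes every generator a hybrid $A_\af\times\{y\suchthat y(\af)=b(\af)\}$, avoiding your split of $\ka$ into a hybrid part $\ka_0$ and a pure reservoir $\ka_1$.
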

\begin{proof}
  Given $(a,b)\in X\times 2^\ka$, \istf\ an independent
  local subbase at $(a,b)$ of cardinality $\ka$. Let
  $\{A_\af\suchthat \af<\ka\}$ be a neighborhood subbase at $a$.
  For each $\af<\ka$, let
  \[U_\af=A_\af\times\{y\in 2^\ka\suchthat y(\af)=b(\af)\}.\]
  Then $\af\not=\bt\imp U_\af\not=U_\bt$ and
  $\mc{U}=\{U_\af\suchthat\af<\ka\}$
  is a local subbase at $(a,b)$.
  To see that $\mc{U}$ is independent, suppose
  $\sigma,\tau\in[\ka]^{<\alo}$ are disjoint.
  Define $c\in 2^\ka$ by $c(\af)=b(\af)$ iff $\af\in\sigma$.
  Then $(a,c)\in U_\af$ for all $\af\in\sigma$ and
  $(a,c)\not\in U_\af$ for all $\af\in\tau$.
\end{proof}

I was not able to adapt the above proof to
show that $X^\ka$ is pin homogeneous.

\begin{question}\label{powerpin}
  Does every compact space have a pin homogeneous power?
\end{question}

\begin{defn}
  A space is {\it crowded} if it has no isolated points.
\end{defn}

\begin{thm}\label{firstctbl}
  Suppose $X$ is a first countable crowded compact space.
  Then $X$ is pin homogeneous.
\end{thm}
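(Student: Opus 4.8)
The plan is to reduce everything to Lemma~\ref{indeppin}: \emph{it suffices to produce at every point a countably infinite independent neighborhood subbase}, since any two such subbases are matched by a bijection, and the lemma then gives $a\pin b$ (with the case $a=b$ trivial). Note that first countability gives $\character{x,X}\leq\alo$ for every $x$, while crowdedness forbids isolated points, so no neighborhood filter is principal and each character is exactly $\alo$; hence the two subbases will indeed be countably infinite. So the entire content of the theorem is the construction, at a fixed point $a$, of a countably infinite independent neighborhood subbase $\{U_n:n<\om\}$.

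To build it, fix a decreasing open neighborhood base $W_0\supset W_1\supset\cdots$ at $a$, and construct the $U_n$ together with a system of witnessing points and open cells indexed by the binary tree. I would maintain, after stage $n$, that each $U_i$ $(i<n)$ is an open neighborhood of $a$; that $N_n=\bigcap_{i<n}U_i\subset W_n$; and that for each $\sigma\in 2^n$ other than the constant string $\bar 1^n$ there is a point $y_\sigma$ with an open cell $V_\sigma\ni y_\sigma$, the cells pairwise disjoint, satisfying $V_\sigma\subset U_i$ when $\sigma(i)=1$ and $V_\sigma\cap U_i=\vn$ when $\sigma(i)=0$. The key device is to \emph{anchor the constant-$1$ branch at $a$ itself} (with cell $N_n$): this branch hugs $a$, while all other branches carry genuine witnesses lying off in the complements of the $U_i$.

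For the inductive step, crowdedness lets me split each off-center cell $V_\sigma$ $(\sigma\neq\bar 1^n)$ into two disjoint nonempty open pieces $P_\sigma$ (destined to lie inside $U_n$) and $Q_\sigma$ (destined to miss $U_n$), and to separate $a$ from some point $w\neq a$ of $N_n$ by disjoint open $P_*\ni a$ and $Q\ni w$ inside $N_n$. I then set $U_n=(P_*\cap W_{n+1})\cup\bigcup_{\sigma}P_\sigma$. Since every $V_\sigma$ with $\sigma\neq\bar 1^n$ already misses $N_n$, intersecting with $N_n$ annihilates all the $P_\sigma$, so $N_{n+1}=N_n\cap U_n=N_n\cap P_*\cap W_{n+1}\subset W_{n+1}$, preserving the base condition; meanwhile the pieces $P_\sigma$, $Q_\sigma$, and $Q$ become the new stage-$(n+1)$ cells with the required membership pattern (one checks $P_\sigma\subset U_n$ and $Q_\sigma\cap U_n=Q\cap U_n=\vn$ from disjointness of the old cells and of $P_*,Q'$). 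Independence is then immediate: given disjoint nonempty $F,G\subset\{0,\dots,n-1\}$, pick $\sigma\in 2^n$ with $\sigma\restrict F\equiv 1$ and $\sigma\restrict G\equiv 0$; as $G\neq\vn$ we have $\sigma\neq\bar 1^n$, and its witness $y_\sigma$ lies in $\bigcap_{i\in F}U_i$ yet outside $\bigcup_{j\in G}U_j$. The subbase condition follows from $N_n\subset W_n$ together with $\{W_n\}$ being a base.

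The main obstacle is exactly the global coherence of each $U_n$: it must simultaneously be a neighborhood of $a$, contain all the ``$1$-cells,'' avoid all the ``$0$-cells,'' and yet shrink so that $\bigcap_{i<n}U_i$ runs down through a fixed base. Anchoring the all-$1$s branch at $a$ and arranging that every off-center cell is automatically disjoint from $N_n$ is what reconciles the ``shrink toward $a$'' demand with the ``stay disjoint from the witnesses'' demand; the remaining verifications that the pattern conditions $V_\tau\subset U_i$ and $V_\tau\cap U_i=\vn$ persist from stage $n$ to $n+1$ are routine but bookkeeping-heavy. Finally, observe that compactness plays no role in building the subbase---it enters only through the appeal to Lemma~\ref{indeppin}.
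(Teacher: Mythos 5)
Your argument is correct, but it takes a genuinely different route from the paper's. The paper's proof is a three-line direct construction: take strictly decreasing neighborhood bases $\{A_n\}$ at $a$ and $\{B_n\}$ at $b$ (crowdedness makes the rings $A_n\setminus A_{n+1}$ and $B_n\setminus B_{n+1}$ nonempty), pair ring with ring by setting
\[S=\{(a,b)\}\cup\bigcup_{n<\om}\left(\closure{A_n\setminus A_{n+1}}\times\closure{B_n\setminus B_{n+1}}\right),\]
and invoke Lemma~\ref{localpin}. Implicitly this matches the two chains by the unique Boolean isomorphism of their Boolean closures, i.e.\ it is the ``chain'' instance of Theorem~\ref{boolisopin}. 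You instead route everything through Lemma~\ref{indeppin} by proving the stronger structural fact that in a first countable crowded compact space \emph{every} point admits a countably infinite independent neighborhood subbase; your binary-tree construction (anchoring the all-ones branch at $a$, keeping the off-center cells disjoint from $N_n=\bigcap_{i<n}U_i$, and forcing $N_n\subset W_n$) is sound, and the verifications you flag as bookkeeping do go through --- the one point deserving emphasis, which you correctly identify, is that $\sigma\neq\bar1^n$ forces $V_\sigma\cap N_n=\vn$, which is what lets $U_n$ shrink toward $a$ without swallowing or missing any witness. The trade-off: the paper's argument is far shorter and uses crowdedness only to keep the rings nonempty, while yours costs an induction but yields a reusable byproduct (the neighborhood filter of every such point is generated by a countable independent family, i.e.\ is ``free''), which aligns these points with the $X\times 2^\ka$ situation of Theorem~\ref{cantorpin} rather than with the chain picture. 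Two trivial touch-ups: handle $a=b$ by reflexivity, and start the induction with $W_0=X$ so that $N_0\subset W_0$ holds vacuously.
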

\begin{proof}
  Let $a,b$ be distinct points in $X$.
  Let $\{A_n\suchthat n<\om\}$ and $\{B_n\suchthat n<\om\}$
  be neighborhood bases at $a$ and $b$ such that
  $A_n\supsetneq A_{n+1}$ and $B_n\supsetneq B_{n+1}$. Let
  \[S=\{(a,b)\}\cup\bigcup_{n<\om}\left(
  \closure{A_n\setminus A_{n+1}}
  \times\closure{B_n\setminus B_{n+1}}\right).\]
  By Lemma~\ref{localpin}, $a\pin b$.
\end{proof}

\begin{prop}
  Pin homogeneity is productive.
\end{prop}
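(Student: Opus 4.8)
The plan is to represent $a\pin b$ coordinatewise and then invoke Theorem~\ref{represent}. Let $X=\prod_{i\in I}X_i$ be a product of pin homogeneous spaces. Since pin homogeneous spaces are compact, Tychonoff's theorem makes $X$ compact. Fix points $a=(a_i)_{i\in I}$ and $b=(b_i)_{i\in I}$ in $X$; \Wma\ $a\not=b$, as otherwise $a\pin b$ trivially. For each $i\in I$ we have $a_i\pin b_i$ because $X_i$ is pin homogeneous, and the goal is to manufacture a single closed symmetric relation $R$ on $X$ that represents $a\pin b$ in the sense of Theorem~\ref{represent}.

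The idea is to build $R$ as a product of representing relations $R_i$ on the factors. For each coordinate $i$ with $a_i\not=b_i$, Theorem~\ref{represent} directly supplies a closed symmetric $R_i$ with domain $X_i$ such that $a_iR_ix\biimp x=b_i$ and $b_iR_ix\biimp x=a_i$. The step that needs a moment's thought is the coordinates with $a_i=b_i$: a representing relation in the literal sense of Theorem~\ref{represent} can misbehave there (if $a_i$ is a non-isolated fixed point, the naive ``everything off the diagonal'' relation has a closure that meets pairs of the form $(a_i,\cdot)$, violating $a_iR_ix\biimp x=a_i$). The clean fix is to take $R_i=\Delta_{X_i}=\{(x,x)\suchthat x\in X_i\}$, the diagonal. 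Since $X_i$ is Hausdorff, $\Delta_{X_i}$ is closed; it is symmetric, has domain $X_i$, and satisfies $a_iR_ix\biimp x=a_i=b_i$, which is exactly what is required.

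Next I would assemble $R=\{(x,y)\in X\times X\suchthat x_iR_iy_i\text{ for all }i\in I\}$ and verify the hypotheses of Theorem~\ref{represent}. Under the canonical homeomorphism $X\times X\cong\prod_{i\in I}(X_i\times X_i)$, the set $R$ corresponds to $\prod_{i\in I}R_i$, a product of closed sets, so $R$ is closed in $X^2$. Symmetry of $R$ is inherited from symmetry of each $R_i$, and $\dom(R)=X$ follows from $\dom(R_i)=X_i$ by choosing, for each $x\in X$, a witness $y_i$ with $x_iR_iy_i$ in every coordinate. Finally, working coordinatewise, $aRx$ holds iff $a_iR_ix_i$ for every $i$, iff $x_i=b_i$ for every $i$, iff $x=b$; symmetrically $bRx\biimp x=a$. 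Thus $R$ represents $a\pin b$, and Theorem~\ref{represent} yields $a\pin b$, so $X$ is pin homogeneous.

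The only genuine obstacle is handling the coordinates with $a_i=b_i$; once one recognizes that the diagonal is a valid (indeed the simplest possible) representing relation there, the remainder is a routine product-topology verification of closedness, symmetry, domain, and the two biconditionals.
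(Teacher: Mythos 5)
Your proof is correct, but it takes a genuinely different route from the paper's. The paper never touches the representation theorem here: it simply takes, for each $i$, a witnessing triple $\xymatrix@C=1pc{Y_i\ar[r]^{g_i}&Y_i\ar[r]^{f_i}&X_i}$ from Definition~\ref{pindef} and observes that the products $Y=\prod_iY_i$, $f=\prod_if_i$, $g=\prod_ig_i$ witness $a\pin b$ directly, since $g\inv{f}\{a\}=\prod_ig_i\inv{f_i}\{a(i)\}=\prod_i\inv{f_i}\{b(i)\}=\inv{f}\{b\}$; coordinates with $a(i)=b(i)$ need no special treatment because the identity triple $(X_i,\id,\id)$ serves. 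Your argument instead products the representing relations $R_i$ and checks the four conditions of Theorem~\ref{represent}, which forces you to notice (correctly) that at coordinates with $a_i=b_i$ the diagonal $\Delta_{X_i}$ is the right choice, and it relies on the ``if'' direction of the representation theorem. Everything you write checks out: $\prod_iR_i$ is closed, symmetric, has full domain, and satisfies the two biconditionals coordinatewise. What your route buys is an explicit representing relation for the product, in the spirit of the corollary that pin equivalence is always witnessed inside $X^2$; what the paper's route buys is brevity and the fact that it works verbatim from the definition with no case analysis. Either proof is acceptable.
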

\begin{proof}
  Suppose that for each $i$ in some set $I$
  we have $\xymatrix@C=1pc{Y_i\ar[r]^{g_i}&Y_i\ar[r]^{f_i}&X_i}$
  witnessing $a(i)\pin b(i)$ in $X_i$.
  Then, letting $X=\prod_iX_i$ and $Y=\prod_iY_i$,
  we have $\xymatrix@C=1pc{Y\ar[r]^{g}&Y\ar[r]^{f}&X}$
  witnessing $a\pin b$ in $X$ where
  $f(y)(i)=f_i(y(i))$ and $g(y)(i)=g_i(y(i))$.
\end{proof}

\section{Pin equivalence and Boolean algebras}
The proofs of Lemma~\ref{indeppin} and Theorem~\ref{firstctbl}
implicitly used Boolean isomorphisms between Boolean closures
of neighborhood bases. Thus, these results are actually
special cases of the following theorem.

\begin{defn}
  A {\it neighborhood subbase} of a subset $E$ of a space $X$
  is family $\mc{S}$ of subsets of $X$ such that smallest
  filter containing $\mc{S}$ is the set of neighborhoods of $E$.
\end{defn}

\begin{defn}
  Given a subset $E$ of a Boolean algebra $A$,
  $\gen{E}$ denotes the Boolean closure of $E$.
\end{defn}

\begin{thm}\label{boolisopin}
  Given closed disjoint subsets $H,K$ of a compact space $X$,
  we have $H\pin K$ if
  $H$ and $K$ have neighborhood subbases $\mc{U}$ and $\mc{V}$
  such that there is a map $f\colon\mc{U}\to\mc{V}$ that
  extends to a Boolean isomorphism
  $\varphi\colon\gen{\mc{U}}\to\gen{\mc{V}}$
  of the Boolean closures of $\mc{U}$ and $\mc{V}$
  in $\powset(X)$.
\end{thm}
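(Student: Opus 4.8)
The plan is to mirror the proof of Lemma~\ref{indeppin}, using the Boolean isomorphism $\varphi$ wherever that proof used independence. First \Wma\ $H$ and $K$ are nonempty: if, say, $H=\vn$, then $\mc U$ generates the improper filter $\powset(X)$, so $\bigcap\mc F=\vn$ for some finite $\mc F\subset\mc U$, whence $\bigcap f(\mc F)=\varphi(\bigcap\mc F)=\vn$ and $K=\vn$ too, while $\vn\pin\vn$ trivially. So assume $H,K$ are nonempty, hence disjoint and distinct, and recall $\bigcap\mc U=H$ and $\bigcap\mc V=K$ since $\mc U,\mc V$ are neighborhood subbases. It suffices to construct a closed symmetric $R\subset X^2$ with $\dom R=X$ such that $(p,q)\in R$ implies $q\in H\biimp p\in K$; then $Y=R$, with $f$ the first projection and $g\colon(x,y)\mapsto(y,x)$, witnesses $H\pin K$ by the same elementary verification given for points just after Theorem~\ref{represent}, now with the closed sets $H,K$ in place of $a,b$.

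The crux is an algebraic lemma: $f(\mc U)=\varphi(\mc U)$ is itself a neighborhood subbase of $K$ (so in particular $\bigcap f(\mc U)=K$). Since $\varphi$ preserves the finite Boolean operations and fixes $X$, for any $(x,y)$ the condition ``$x\in W\biimp y\in\varphi(W)$ for all $W\in\gen{\mc U}$'' is equivalent to its restriction to $W\in\mc U$. To prove the lemma, fix $V\in\mc V$ and expand $V$ in disjunctive normal form over the finitely many members of $f(\mc U)$ it involves. If every disjunct used a complemented generator, then no point of $\bigcap f(\mc U)$ would lie in $V$; but $K\subset\bigcap f(\mc U)$ and $K\subset V$ with $K\neq\vn$, a contradiction. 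Hence some disjunct is a plain finite intersection, giving $\bigcap f(\mc F)\subset V$ for some finite $\mc F\subset\mc U$. As $\mc V$ is a subbase, finite intersections from $f(\mc U)$ are therefore cofinal in $\mc{N}_X(K)$. This disjunctive-normal-form step, where the purely algebraic $\varphi$ is forced to respect the topology, is the step I expect to be the main obstacle.

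Granting the lemma, I build the matching relation as in Lemma~\ref{indeppin}. For finite $\mc F\subset\mc U$ let $S_{\mc F}=\bigcap_{U\in\mc F}\left((U\times f(U))\cup((X\setminus U)\times(X\setminus f(U)))\right)$, and put $S=\bigcap_{\mc F\in[\mc U]^{<\alo}}\closure{S_{\mc F}}$, which is closed. Because $\varphi$ is an isomorphism it sends nonzero elements to nonzero elements, so any finite ``type'' realized by a point on one side (the atom of $\gen{\mc F}$ containing it is nonempty) corresponds to a nonempty atom on the other; hence each $S_{\mc F}$ has full domain and range, and a compactness argument on the nested compact fibers gives $\dom S=\ran S=X$. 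For the fiber behavior, suppose $(x,y)\in S$ and $x\in H$; passing through the sets $\closure{S_{\mc F}}$ and using that $\{\bigcap f(\mc F)\}_{\mc F}$ is a neighborhood base of $K$ with $\bigcap_{\mc F}\closure{\bigcap f(\mc F)}=K$, one forces $y\in K$, and the symmetric argument through the subbase $\mc U$ gives the converse. Thus $(x,y)\in S$ implies $x\in H\biimp y\in K$.

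Finally I localize and assemble. Choose disjoint closed neighborhoods $G_H\supset H$ and $G_K\supset K$ and set $S'=S\cap(G_H\times G_K)$. Since $S$ sends $H$ into $K\subset\interior{G_K}$ and is closed, the tube argument of Lemma~\ref{likects} shows the $S$-fibers over a neighborhood of $H$ stay inside $\interior{G_K}$; as these fibers are nonempty, $\dom S'$ is a neighborhood of $H$, and symmetrically $\ran S'$ is a neighborhood of $K$, with $\dom S'\subset G_H$ and $\ran S'\subset G_K$ disjoint. Then, exactly as in the second half of Lemma~\ref{localpin}, $R=S'\cup\inv{S'}\cup D^2$ with $D=\closure{X\setminus(\dom S'\cup\ran S')}$ is closed, symmetric, and has $\dom R=X$. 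The pin condition $(p,q)\in R\imp(q\in H\biimp p\in K)$ then holds in each of three cases: on $S'$ and on $D^2$ it is vacuous, since each of $G_H,G_K,D$ meets at most one of $H,K$; on $\inv{S'}$ it is precisely the fiber property of $S$ established above. Hence $R$ witnesses $H\pin K$.
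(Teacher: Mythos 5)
Your proposal is correct and follows essentially the same route as the paper's proof: both build the matching relation as an intersection of closures of finite unions $\bigcup_E E\times\varphi(E)$ over the atoms of finitely generated subalgebras, get full domain and range from compactness plus the fact that $\varphi$ preserves nonzero elements, derive the fiber condition from the neighborhood-base property of finite intersections from the subbases, and then localize and symmetrize. The differences are only organizational: you localize at the end rather than restricting to $A$ and $\varphi(A)$ at the outset, handle the degenerate case $H=\vn$ explicitly, and isolate as a separate lemma the fact that $f(\mc{U})$ is again a neighborhood subbase of $K$, which the paper uses implicitly when it asserts $K\subset\varphi(A)$.
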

\begin{proof}
  Let $\varphi\colon\gen{\mc{U}}\to\gen{\mc{V}}$ be as above.
  Choose $U\in\gen{\mc{U}}$ and $V\in\gen{\mc{V}}$ such
  that $H\subset U$, $K\subset V$, and $U\cap V=\vn$.
  Letting $A=U\cap\inv{\varphi}(V)$, we obtain
  $H\subset A$, $K\subset\varphi(A)$, and $A\cap\varphi(A)=\vn$.
  Let $\mc{A}$ be the Boolean subalgebra
  $\gen{\mc{U}}\cap\powset(A)$
  of $\powset(A)$ (not a Boolean subalgebra of $\powset(X)$);
  let $\mc{B}$ be the Boolean subalgebra
  $\gen{\mc{V}}\cap\powset(B)$
  of $\powset(B)$ where $B=\varphi(A)$;
  let $\psi$ be the restriction of $\varphi$ to $\mc{A}$.
  Then $\psi$ is a Boolean isomorphism to $\mc{B}$.

  For each finite partition $\mc{E}\subset\mc{A}$,
  the relation
  \[T_{\mc{E}}=\bigcup_{E\in\mc{E}}(E\times \psi(E))\]
  has domain $A$ and range $B$.
  Moreover, if $\mc{F}$ refines $\mc{E}$, then
  $T_{\mc{F}}\subset T_{\mc{E}}$. 
  By compactness, each $\closure{T_{\mc{E}}}$
  has domain $\closure{A}$ and range $\closure{B}$;
  so does $S=\bigcap_{\mc{E}}\closure{T_{\mc{E}}}$.
  For any $x$, if $xT_{\mc{E}}y$ for some $y\in K$,
  then $x\in E$ for the unique $E\in\mc{E}$
  with $H\subset E$. Therefore, $xSy\in K\imp x\in H$.
  Analogously, $H\ni xSy\imp y\in K$.
  Let \[R=S\cup\inv{S}\cup
  \left(\closure{X\setminus(A\cup B)}\right)^2.\]
  Then the involution $g\colon R\to R$ given by $g(x,y)=(y,x)$
  and the coordinate projection $f\colon R\to X$
  given by $f(x,y)=x$ witness that $H\pin K$.
\end{proof}

\begin{defn}\
  \begin{itemize}
  \item Two filters $F,G$ of a Boolean algebra
    are {\it incompatible} if $x\wedge y=0$
    for some $(x,y)\in F\times G$.
  \item A subset $E$ of a filter $F$ of a Boolean algebra $A$
    {\it generates} $F$ in $A$ is $F$ is the smallest filter
    of $A$ that contains $E$. 
  \end{itemize}  
\end{defn}

\begin{cor}\label{filteriso}
  Suppose that $F$ and $G$ are incompatible filters
  of a Boolean algebra and that they are generated
  by sets $D$ and $E$. If there is a map from
  $D$ to $E$ that extends to a Boolean isomorphism
  from $\gen{D}$ to $\gen{E}$, then $F\pin G$.
\end{cor}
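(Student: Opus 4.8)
The plan is to move to the Stone space of $A$, apply Theorem~\ref{boolisopin} there, and then dualize the resulting witness back into an automorphism of an extension of $A$. Throughout, let $X$ be the Stone space of $A$ and identify $A$ with $\Clop(X)$ by the Stone map $a\mapsto\hat a$.

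First I would translate the hypotheses. Set $H=\bigcap_{a\in F}\hat a$ and $K=\bigcap_{b\in G}\hat b$, the hulls of $F$ and $G$; these are closed, and disjoint because incompatibility supplies $x\in F$, $y\in G$ with $x\wedge y=0$, so that $H\subset\hat x$, $K\subset\hat y$, and $\hat x\cap\hat y=\vn$. Since $F$ is a filter, the clopen neighborhoods of $H$ are exactly $\{\hat a\suchthat a\in F\}$ (by compactness), and since $D$ generates $F$, every such $\hat a$ contains a finite intersection of members of $\mc U=\{\hat d\suchthat d\in D\}$; hence $\mc U$ is a neighborhood subbase of $H$, and likewise $\mc V=\{\hat e\suchthat e\in E\}$ is one of $K$. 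As the Stone map is a Boolean embedding, $\gen{\mc U}=\{\hat a\suchthat a\in\gen D\}$ and $\gen{\mc V}=\{\hat c\suchthat c\in\gen E\}$ in $\powset(X)$, and the given isomorphism $\gen D\to\gen E$ transports to a Boolean isomorphism $\gen{\mc U}\to\gen{\mc V}$ extending the induced map $\mc U\to\mc V$. Theorem~\ref{boolisopin} now yields $H\pin K$.

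Next I would dualize the witness. Inspecting the proof of Theorem~\ref{boolisopin}, $H\pin K$ is witnessed by a closed symmetric $Y\subset X^2$, the first-coordinate projection $f\colon Y\to X$, and the involution $g(x,y)=(y,x)$, with $g\inv fH=\inv fK$. Because $X$ is Boolean, so are $X^2$ and its closed subspace $Y$; put $B=\Clop(Y)$. The homomorphism $f^*\colon\Clop(X)\to B$ is injective (as $f$ is onto), so it realizes $B$ as a Boolean extension of $A$, and $h=g^*$ is a Boolean automorphism of $B$.

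The remaining, and I expect hardest, step is to verify that $h$ sends the filter of $B$ generated by $F$ to the one generated by $G$, that is, that the topological condition $g\inv fH=\inv fK$ delivers exactly the filter identity demanded by Definition~\ref{boolpindef}. The filter generated by $F$ is generated by $\{\inv f\hat a\suchthat a\in F\}$, and each such set is a clopen neighborhood of $\inv fH$; a priori this filter could be strictly smaller than the full clopen neighborhood filter of $\inv fH$. The key is that it is not: given clopen $C\supset\inv fH$, the fiberwise tube lemma (Lemma~\ref{fibertube}), applied at each point of the compact set $H$ and combined with compactness of $H$, produces an open $U\supset H$ with $\inv fU\subset C$, and then a clopen $\hat a$ with $H\subset\hat a\subset U$, forcing $a\in F$ and $\inv f\hat a\subset C$. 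Thus the filter generated by $F$ equals the clopen neighborhood filter of $\inv fH$, and symmetrically the filter generated by $G$ equals that of $\inv fK$. Since $g$ is a homeomorphism, $h=g^*$ carries the clopen neighborhood filter of $\inv fH$ bijectively onto that of $g^{-1}\inv fH=g\inv fH=\inv fK$. Hence $h$ maps the filter generated by $F$ onto the filter generated by $G$, giving $F\pin G$.
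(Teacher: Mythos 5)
Your proof is correct and is exactly the argument the paper intends: the corollary is stated without proof as the Stone dual of Theorem~\ref{boolisopin}, and you carry out that duality, correctly supplying the one nonobvious detail (via Lemma~\ref{fibertube} and compactness) that the filter of $\Clop(Y)$ generated by $F$ coincides with the clopen neighborhood filter of $\inv{f}H$.
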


When compared to Definition~\ref{boolpindef},
the converse of Corollary~\ref{filteriso} looks
too good to be true. But I have not yet found a counterexample.

\begin{problem}\label{isoconverse}
  Find a Boolean algebra with pin equivalent and incompatible
  filters $F$, $G$ such that for all bijections $\varphi\colon D\to E$,
  if $D$ generates $F$ and $E$ generates $G$, then $\varphi$ does
  not extend to a Boolean isomorphism from $\gen{D}$ to $\gen{E}$.
\end{problem}

We next use the above theorem to show that
pin equivalence does not preserve $\pi$-character.

\begin{defn}
  The {\it$\pi$-character} $\pi\character{a,X}$
  of a point $a$ in a space $X$ is the least of
  the cardinalities of families $\mc{F}$
  of nonempty open subsets of $X$ such that
  every neighborhood of $a$ contains an element of $\mc{F}$.
  Such a family is called a {\it local $\pi$-base} at $a$.
\end{defn}

\begin{defn}
  Given an ordinal $\af$, $2^\af_{\tlex}$ is the set
  of all $f\colon\af\to 2$ with the lexicographic ordering
  and the associated order topology (which is compact).
\end{defn}

In $L=2^{\oml}_{\tlex}$, every monotone $\om_2$-sequence
is eventually constant. But every point is
the limit of a strictly increasing $\oml$-sequence or
the limit of a strictly decreasing $\oml$-sequence (or both).
Moreover, topologically, there are exactly three types of points in $L$,
as shown in the illustration below.
\[
\begin{xymatrix}
  {\text{II}&\ar[l]^{\oml}
    \ar[r]_{\oml}&\text{I}&\ar[l]^{\oml}
    \ar[r]_{\om}&\text{III}&\ar[l]^{\oml}
    \ar[r]_{\oml}&\text{III}&\ar[l]^{\om}
    \ar[r]_{\oml}&\text{II}}
\end{xymatrix}
\]
$2^{\aleph_1}$-many points of $L$ are simultaneously
the limit of a strictly increasing $\oml$-sequence and
the limit of a strictly decreasing $\oml$-sequence.
Call these points {\it type I}.
Call the two endpoints and the $2^{\alo}$-many points of $L$
with either an immediate predecessor or immediate successor
{\it type II}.\footnote{
Types I and II are topologically distinguishable:
each type I point $a$ is in the closure of each of
two disjoint topological copies of $\oml$ in $L\setminus\{a\}$;
the type II points lack this property.}
All points of type I or II are P-points
with Tukey type $\oml$ and $\pi$-character $\oml$.
The remainder of $L$, the set of {\it type III} points,
consists of $2^{\alo}$-many limits of strictly increasing
or strictly decreasing $\om$-sequences.
These have Tukey type $\om\times\oml$
and have $\pi$-character $\om$ because
the nonempty open intervals
with endpoints from the $\om$-sequence form a local $\pi$-base.

In the product space $K=2^\om\times L$, there are no P-points.
But $K$ inherits both $\pi$-characters of $L$; indeed,
$\pi\character{(p,q),K}=\pi\character{q,L}$.
On the other hand, every point in $K$ has Tukey type $\om\times\oml$.
Interestingly, $K$ is also pin homogeneous.

\begin{defn}
  Subalgebras $A_0,\ldots A_{n-1}$ of a Boolean algebra $B$ are
  {\it independent} if, for all $x\in\prod_{i<n}A_i$,
  if $x(i)\not=0$ for all $i$, then $\bigwedge_{i<n}x(i)\not=0$.
\end{defn}

\begin{thm}
  $2^\om\times2^{\oml}_{\tlex}$ is pin homogeneous.
\end{thm}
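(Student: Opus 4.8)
The plan is to produce, at every point of $K=2^\om\times L$ (writing $L=2^{\oml}_{\tlex}$), a neighborhood subbase in a single normal form, and then invoke Theorem~\ref{boolisopin}. Concretely, I will arrange that the Boolean closure of the chosen subbase is always isomorphic to the same Boolean free product $A_0*C*B$, where $A_0$ is the countable atomless Boolean algebra, $C$ is the interval algebra of $\om$, and $B$ is the interval algebra of $\oml$; and, crucially, that the subbase decomposes into three pairwise-independent pieces generating these three factors -- one an independent family of size $\om$, one a chain of order type $\om$, and one a chain of order type $\oml$. Since distinct points of a Hausdorff space are disjoint closed sets, and since a bijection that matches independent family to independent family, $\om$-chain to $\om$-chain (order-preservingly), and $\oml$-chain to $\oml$-chain extends, by the universal property of the free product (\ie, by the just-defined joint independence of the subalgebras), to a Boolean isomorphism of the closures, Theorem~\ref{boolisopin} would then immediately yield $(p,q)\pin(p',q')$ for every pair of points.

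Recall that points of $L$ come in the three types I, II, III, and that in $K$ the $\pi$-character of $(p,q)$ is that of $q$. I split points of $K$ according to whether $q$ has type I or II ($\pi$-character $\oml$) or type III ($\pi$-character $\om$). For a type III coordinate $q$, one side of $q$ is approached by a strictly monotone $\om$-sequence and the other by a strictly monotone $\oml$-sequence; the corresponding families of half-infinite intervals are a chain $\mc{C}_\om$ of tails (closure $C$) and a chain $\mc{C}_{\oml}$ of initial segments (closure $B$), and these two are independent in $\powset(L)$ because ``lying above the left cut'' and ``lying below the right cut'' are orthogonal conditions in a linear order. Taking also a countable independent neighborhood subbase of $p$ in $2^\om$ (\eg, the coordinate sets, closure $A_0$), the three families are jointly independent in $\powset(K)$ and form a neighborhood subbase of $(p,q)$ whose closure is $A_0*C*B$. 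For a type I or II coordinate $q$, both relevant sides of $q$ have cofinality $\oml$: for type I the diagonal intervals $(\ell_\xi,r_\xi)_{\xi<\oml}$ form a single decreasing chain that is a neighborhood base (closure $B$), and for type II the non-jump side already supplies such a chain. Here the $\om$-chain factor $C$ is absent from $L$, so I manufacture it inside $2^\om$: splitting the coordinates of $2^\om$ into odds and evens, the odd-coordinate sets form an independent family (closure $A_0$) and the even-prefix sets form a chain of order type $\om$ (closure $C$), these two are independent, and together they still generate the neighborhood filter of $p$. Combined with the $\oml$-chain from $L$, this again yields a neighborhood subbase of $(p,q)$ with closure $A_0*C*B$ and the required three-piece decomposition.

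With both cases reduced to the same normal form, any two points are connected: choose the normal-form subbases, define the matching bijection piece by piece, and extend to a Boolean isomorphism of the closures by the universal property of the free product; Theorem~\ref{boolisopin} then delivers pin equivalence. The main obstacle is the cross-$\pi$-character matching, \ie, relating a type III point to a type I/II point. Its resolution rests on the Boolean-algebraic absorption $A_0*C\iso A_0$: a countable chain disappears into the countable atomless algebra, which is exactly why the $\pi$-character-$\om$ direction of a type III point can be matched against points all of whose directions have cofinality $\oml$, with the deficit supplied from the $2^\om$ factor. The technical heart, accordingly, is verifying that each point genuinely admits a normal-form subbase -- that the three designated families are \emph{jointly} independent in $\powset(K)$ (so their closure really is the free product $A_0*C*B$ and not a proper quotient) and that they generate precisely the neighborhood filter of the point. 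This joint-independence check is where the concrete geometry of $L$ (orthogonality of the two cuts at a point) and of $2^\om$ (independence of disjoint coordinate blocks) must be combined, and is the step I expect to require the most care.
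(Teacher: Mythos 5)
There is a genuine gap, and it sits exactly at the step you flag as ``the technical heart'': the claimed independence of the two ray-chains at a type III coordinate is false. Write the type III point $q$ as the limit of $\ell_n\nearrow q$ ($\om$-cofinal side) and $r_\af\searrow q$ ($\oml$-coinitial side), so that $\mc{C}_\om$ consists of the tails $(\ell_n,\rightarrow)$ and $\mc{C}_{\oml}$ of the initial segments $(\leftarrow,r_\af)$. The nonzero element $(\ell_n,\rightarrow)\setminus(\ell_{n+1},\rightarrow)=(\ell_n,\ell_{n+1}]$ of $\gen{\mc{C}_\om}$ lies entirely below $q$, while the nonzero element $(\leftarrow,r_\af)\setminus(\leftarrow,r_{\af+1})=[r_{\af+1},r_\af)$ of $\gen{\mc{C}_{\oml}}$ lies entirely above $q$; since $\ell_{n+1}<q<r_{\af+1}$, these are disjoint. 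So the two conditions are not ``orthogonal'' in the linear order --- both chains converge to the same point, and their local increments are separated by that point. Consequently the Boolean closure of your designated subbase at a type III point satisfies nontrivial relations between the $C$-generators and the $B$-generators ($(\ell_n,\ell_{n+1}]\wedge[r_{\af+1},r_\af)=0$) that do not hold between the corresponding generators at a type I/II point (where, as you correctly arrange, the $\om$-chain lives in the $2^\om$ coordinate and really is independent of the $\oml$-chain). Hence the closure at a type III point is a proper quotient of $A_0*C*B$, no generator-matching bijection extends to an isomorphism, and Theorem~\ref{boolisopin} cannot be applied as you propose. (Your type I/II construction via the odd/even coordinate split is fine; the $\om$-chain absorption remark is harmless but not the issue.)

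The repair is to make the $\om$-chain at a type III point independent of the $\oml$-chain by coupling it with the $2^\om$ factor rather than leaving it inside $L$: take a strictly decreasing clopen neighborhood base $(P_n)_{n<\om}$ of $p$ in $2^\om$ and use the chain $T_n=P_n\times R_n$ (with $R_n=(\ell_n,\rightarrow)$) against the chain $S_\af=2^\om\times(\leftarrow,r_\af)$. Then $T_n\setminus T_{n+1}$ contains $(P_n\setminus P_{n+1})\times R_n$, whose intersection with $S_\af\setminus S_{\af+1}$ contains the nonempty rectangle $(P_n\setminus P_{n+1})\times[r_{\af+1},r_\af)$, so independence is restored by the horizontal spread coming from $2^\om$. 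This is what the paper does; it also dispenses with your separate factor $A_0$ entirely, folding the whole neighborhood base of $p$ into the single decreasing chain $(P_n)$ and reducing the normal form to just two independent chains, one of type $\om$ and one of type $\oml$, in every case.
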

\begin{proof}
  Continuing to use the above notation $K,L$, suppose that
  $x^0=(p^0,q^0)$, and $x^1=(p^1,q^1)$ are distinct points in $K$.
  For each $i<2$, conditionally define sets
  $P^i_n$, $Q^i_\af $, $R^i_n$, $S^i_\af$, $T^i_n$ as follows.
  Let $\{P^i_n\suchthat n<\om\}$ be a neighborhood base at $p^i$
  such that $P^i_n\supsetneq P^i_{n+1}$.
  If $q^i$ is type I or II, then let $(Q^i_\af)_{\af<\oml}$ be
  a sequence of intervals such that
  $Q^i_\af\supsetneq Q^i_\bt$ for $\af<\bt$,
  $q^i$ is in the interior of each $Q^i_\af$,
  and $\{q^i\}=\bigcap_\af Q^i_\af$.
  If $q^i$ is type III, let
  $(Q^i_\af)_{\af<\oml}$ be a sequence of rays of $L$ such that
  $Q^i_\af\supsetneq Q^i_\bt$ for $\af<\bt$ and
  $q^i$ is the interior of each $Q^i_\af$
  but on the boundary of $\bigcap_\af Q^i_\af$.
  In all cases, let $S^i_\af=2^\om\times Q^i_\af$.
  If $q^i$ is type I or II, let $T^i_n=P^i_n\times K$.
  If $q^i$ is type III, let $T^i_n=P^i_n\times R^i_n$
  where $(R^i_n)_{n<\om}$ is a sequence of rays of $L$
  such that $R^i_n\supsetneq R^i_{n+1}$ and
  $q^i$ is the interior of each $R^i_n$
  but on the boundary of $\bigcap_n R^i_n$.
  In all cases, \[\mc{B}^i=\{S^i_\af\suchthat \af<\oml\}
  \cup\{T^i_n\suchthat n<\om\}\] is a neighborhood subbase at $x^i$.
  
  Let $\mc{S}^i$ be the Boolean closure of the set of
  all sets of the form $S^i_\af$.
  Let $\mc{T}^i$ be the Boolean closure of the set of
  all sets of the form $T^i_n$.
  Let $\mc{U}^i=\gen{\mc{S}^i\cup\mc{T}^i}$.
  Since $S^i_\af\supsetneq S^i_\bt$ for $\af<\bt$,
  the map $S^0_\af\mapsto S^1_\af$ extends uniquely to
  an isomorphism of $\sigma\colon\mc{S}^0\to\mc{S}^1$.
  Likewise, the map $T^0_n\mapsto T^1_n$ extends uniquely to
  an isomorphism of $\tau\colon\mc{T}^0\to\mc{T}^1$.
  Moreover, for each $i<2$, $\mc{S}^i$ and $\mc{T}^i$
  are independent because, for each $\af<\oml$ and $n<\om$,
  the intersection of
  $S^i_\af\setminus S^i_{\af+1}$ and $T^i_n\setminus T^i_{n+1}$
  is nonempty because in all cases it contains
  \[(P^i_n\setminus P^i_{n+1})\times(Q^i_\af\setminus Q^i_{\af+1}).\]
  Therefore, $\sigma\cup\tau$ extends uniquely
  to an isomorphism from $\mc{U}^0$ onto $\mc{U}^1$.
  Therefore, by Theorem~\ref{boolisopin}, $x^0\pin x^1$.
\end{proof}

It is not too hard to generalize the above theorem
to $\prod_{m\leq n}X_m$ where $X_m=2^{\om_m}_{\tlex}$ and $n<\om$.
The points of this product space attain all $\pi$-characters
in $[\om,\om_n]$ and have Tukey type $\prod_{m\leq n}\om_m$.
(Note that the product and lexicographic order topologies
on $2^\om$ are identical.)

\begin{thm}
  For each $n<\om$, $\prod_{m\leq n}2^{\om_m}_{\tlex}$ is
  pin homogeneous.
\end{thm}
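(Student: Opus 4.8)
The plan is to reprise the proof that $2^\om\times 2^{\oml}_{\tlex}$ is pin homogeneous, replacing its two chains (one of length $\om$, one of length $\oml$) with $n+1$ chains, one of each length $\om_0,\dots,\om_n$, and then to invoke Theorem~\ref{boolisopin}. Fix distinct points $x^0=(x^0_0,\dots,x^0_n)$ and $x^1=(x^1_0,\dots,x^1_n)$, with $x^i_m\in 2^{\om_m}_{\tlex}$. For each $i$ I will produce a neighborhood subbase $\mc{B}^i$ at $x^i$ that decomposes as $\bigcup_{\ell\le n}\mc{C}^i_\ell$, where each $\mc{C}^i_\ell$ is a strictly decreasing chain of length $\om_\ell$. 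I will check that the $n+1$ chains are mutually independent and that the stagewise bijection induces an isomorphism $\gen{\mc{C}^0_\ell}\cong\gen{\mc{C}^1_\ell}$ for each $\ell$, so that these amalgamate into one isomorphism $\gen{\mc{B}^0}\to\gen{\mc{B}^1}$ carrying $\mc{B}^0$ onto $\mc{B}^1$, exactly as $\sigma\cup\tau$ did in the model proof.

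To build the chains I first read off, in each coordinate, the one-sided behavior of $x^i_m$ in $2^{\om_m}_{\tlex}$. Writing $\cf^-_m$ and $\cf^+_m$ for the cofinality of the set of predecessors and the coinitiality of the set of successors of $x^i_m$, each lies in $\{0,1,\om,\dots,\om_m\}$, and because the coordinates on which $x^i_m$ takes the values $0$ and $1$ partition the regular cardinal $\om_m$, at least one of $\cf^-_m,\cf^+_m$ equals $\om_m$. To each side whose cofinality is an infinite $\om_\ell$ I attach a strictly decreasing $\om_\ell$-chain of rays of $2^{\om_m}_{\tlex}$ closing down on $x^i_m$ from that side (when both sides equal $\om_m$ I instead use a single $\om_m$-chain of intervals shrinking to $x^i_m$, as the type I/II case did). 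Since the major side of coordinate $m$ always has cofinality exactly $\om_m$, for every $\ell\le n$ there is at least one side of cofinality $\om_\ell$, namely the major side of coordinate $\ell$. I then form $\mc{C}^i_\ell$ by taking, at each index $\xi<\om_\ell$, the product over all coordinates of the stage-$\xi$ ray for every side of cofinality $\om_\ell$ and the full space $2^{\om_m}_{\tlex}$ for every other coordinate; this diagonal product is a single strictly decreasing $\om_\ell$-chain, whose Boolean closure is freely generated by a decreasing $\om_\ell$-chain, giving the required $\gen{\mc{C}^0_\ell}\cong\gen{\mc{C}^1_\ell}$. That $\bigcup_\ell\mc{C}^i_\ell$ generates the neighborhood filter of $x^i$ follows because the two sides' rays in each coordinate form a neighborhood base there.

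The crux, as in the model proof, is independence of the chains, and here the diagonal merging is essential rather than cosmetic: it is precisely what prevents a chain from pinning any single coordinate to a bounded one-sided interval. Every side of a coordinate is used by at most one chain — the major side of coordinate $\ell$ by $\mc{C}_\ell$, and the minor side of coordinate $m$, when its cofinality is an infinite $\om_j$ with $j<m$, by $\mc{C}_j$; on any coordinate it does not touch a chain is full, and on the complementary side of a coordinate whose other side it owns its rays are full as well (a left ray contains every point to the right of $x_m$, and conversely). To verify independence it suffices to intersect one chosen gap from each chain, and I will exhibit a witnessing product rectangle by having each $\mc{C}_\ell$ ``spend its drop'' at its home coordinate $\ell$: in coordinate $\ell$ I place the point inside the gap that $\mc{C}_\ell$ itself uses there, which sits on the major side near $x_\ell$. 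Then in every coordinate the chosen point lies in each chain's stage-$\xi$ ray (any other chain meeting that coordinate presents the full complementary side, which contains the point) and in the gap at that chain's own home coordinate; hence the single point lies in the selected gap of all $n+1$ chains at once. Since each major-side gap is nonempty, the rectangle is nonempty, yielding mutual independence.

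With independence established, the $n+1$ stagewise isomorphisms amalgamate into an isomorphism $\varphi\colon\gen{\mc{B}^0}\to\gen{\mc{B}^1}$ carrying the subbase $\mc{B}^0$ onto $\mc{B}^1$, and applying Theorem~\ref{boolisopin} to the disjoint closed sets $\{x^0\}$ and $\{x^1\}$ gives $x^0\pin x^1$, so the product is pin homogeneous. I expect the only genuine work to be the bookkeeping of the previous paragraph — tracking which chain owns which side of which coordinate and confirming that the rays behave as claimed — while the Tukey-type and $\pi$-character assertions quoted before the theorem drop out of the same side-cofinality analysis and need not re-enter the pin-homogeneity argument.
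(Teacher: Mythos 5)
Your construction is, in all essentials, the paper's own: the paper likewise assigns to each coordinate point $x\in 2^{\om_m}_{\tlex}$ a major chain $(P_\af(x))_{\af<\om_m}$ and a minor chain of length $\om_{s(x)}$, merges all sides of cofinality $\om_\ell$ diagonally into one strictly decreasing chain $(R^\ell_\af)_{\af<\om_\ell}$, proves independence by exhibiting $\prod_{m\le n}(P^m_{\af(m)}\setminus P^m_{\af(m)+1})$ inside the intersection of the chosen gaps, and invokes Theorem~\ref{boolisopin}. Your bookkeeping for independence (each chain ``spends its drop'' at its home coordinate, while every other chain's ray at that coordinate is a complementary-side ray containing the whole major-side gap) is exactly the paper's parenthetical observation that $P^m_{\af}\setminus P^m_{\af+1}=P^m_{\af}\cap Q^m_\bt\setminus P^m_{\af+1}$ for all $\bt$.

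There is, however, one genuine gap. You attach chains only to sides of \emph{infinite} cofinality, and then assert that $\bigcup_\ell\mc{C}^i_\ell$ generates the neighborhood filter of $x^i$ ``because the two sides' rays in each coordinate form a neighborhood base there.'' But if some coordinate $x^i_m$ has an immediate successor (or predecessor) $z$ --- and such points exist in every $2^{\om_m}_{\tlex}$, e.g.\ the sequence equal to $1$ everywhere except at a single position --- then the lone minor-side ray excluding $z$ belongs to no chain, every finite intersection of members of your family contains $z$ in that coordinate, and the filter generated is the neighborhood filter of a closed set strictly larger than $\{x^i\}$. Theorem~\ref{boolisopin} is then being applied to families that are not neighborhood subbases of the singletons, so pin equivalence is not established for such points. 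The paper closes exactly this case by setting $\om_{-1}=1$ and folding the single minor ray into the home chain, taking $P^m_\af\cap Q^m_0$ at coordinate $m$ when $s_m=-1$; the same one-line patch repairs your argument without disturbing independence, since every major-side gap lies inside every minor-side ray.
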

\begin{proof}
  For convenience, let $\om_{-1}=1$.
  Using the above $X_m$ notation,
  for each $m\leq n$ and $x\in X_m$ there is a least
  $s(x)\in\{-1,0,\ldots,m\}$ for which there are two
  strictly decreasing sequences of rays
  $(P_\af(x)\suchthat\af<\om_m)$ and
  $(Q_\bt(x)\suchthat\af<\om_{s(x)})$
  such that each of these rays has $x$ in its interior and
  \[\{x\}=\bigcap_{\af<\om_m}P_\af(x)\cap
  \bigcap_{\af<\om_{s(x)}}Q_\af(x).\]
  Given $y\in Y=\prod_{m\leq n}X_m$, \istst\ $y$ has a
  neighborhood subbase consisting of the union of $n+1$
  strictly decreasing chains $(R^m_\af \suchthat \af<\om_m)$
  for $m\leq n$ whose respective Boolean closures
  $\mc{A}_0,\ldots,\mc{A}_n$ are independent.
  Letting $y_i=y(i)$, $s_i=s(y_i)$, $P^i_\af=P_\af(y_i)$,
  and $Q^i_\af=Q_\af(y_i)$ for each $i\leq n$,
  define $R^m_\af=\prod_{i\leq n}S^{m,i}_\af$ where
  \[S^{m,i}_\af=
  \begin{cases}
    X_i&:i<m\\
    P^m_\af\cap Q^m_0&: i=m; s_m=-1\\
    P^m_\af&: i=m; 0\leq s_m<m\\
    P^m_\af\cap Q^m_\af&: i=m; s_m=m\\
    X_i&:i>m; s_i\not=m\\
    Q^i_\af&: i>m; s_i=m,\\
  \end{cases}\] thus making
  $(R^m_\af \suchthat \af<\om_m)$ strictly decreasing
  for each $m\leq n$ and the union of these $n+1$ chains
  a neighborhood subbase at $y$. Moreover,
  $\mc{A}_0,\ldots,\mc{A}_n$ are independent because
  if $\af(m)<\om_m$ for each $m\leq n$, then
  $\bigcap_{m\leq n}(R^m_{\af(m)}\setminus R^m_{\af(m)+1})$
  is nonempty because it contains
  $\prod_{m\leq n}(P^m_{\af(m)}\setminus P^m_{\af(m)+1})$.
  (In verifying this, a key observation is that
  \(P^m_{\af(m)}\setminus P^m_{\af(m)+1}
  =P^m_{\af(m)}\cap Q^m_\bt\setminus P^m_{\af(m)+1}\)
  for all $\bt$.)
\end{proof}

On the other hand, it is shown in~\cite{mrect} that
if $X$ is a compact space and $P$ and $Q$ are directed sets
such that $\cf(P),\cf(Q)\geq\om$ and $Q$ is $\cf(P)^{++}$-directed,
then $X$ has a point not of Tukey type $P\times Q$.
In particular, we cannot have a compact space,
pin homogeneous or otherwise, 
with all points of Tukey type $\om\times\om_2$.

\section{Acknowledgements}
  This research was conducted in part while I was an associate professor at Texas A\&M International University.

\end{document}